\documentclass[12pt,a4paper,article]{amsart}
\usepackage{amsmath, amsthm, verbatim, amsfonts, amssymb, color}
\usepackage{mathrsfs}
\usepackage{dsfont}
\usepackage{mathtools}
\usepackage[a4paper,margin=1in]{geometry}

%\usepackage[T1]{fontenc}
%\usepackage[utf8]{inputenc}

% Theorems etc.
\theoremstyle{plain}
\newtheorem{theorem}{Theorem}[section]
\newtheorem{corollary}[theorem]{Corollary}
\newtheorem{lemma}[theorem]{Lemma}
\newtheorem{proposition}[theorem]{Proposition}

\theoremstyle{definition}

\newtheorem*{h1}{(H1)}
\newtheorem*{h2}{(H2)}

\newtheorem*{ack}{Acknowledgement}
% Numbering Commands
\numberwithin{equation}{section}
\renewcommand\labelenumi{\textup{\alph{enumi})}}
\renewcommand\theenumi\labelenumi

\newcommand\Ee{\mathds{E}}
\newcommand\Pp{\mathds{P}}
\newcommand\Ww{\mathds{W}}
\newcommand\Ss{\mathds{S}}
\newcommand\Dd{\mathds{D}}

\newcommand\real{{\mathds{R}}}

\newcommand\rd{{\mathds{R}^d}}

\newcommand\I{\mathds{1}}

\newcommand\dup{\mathrm{d}}
\newcommand\dif{\mathrm{d}}
\newcommand\eup{\mathrm{e}}
\newcommand\iup{\mathop{\mathrm{i}}}

\newcommand\Lip{\mathrm{Lip}}
\newcommand\RE{\operatorname{Re}}
\newcommand\Dom{\mathrm{Dom}}

\newcommand\HS{\mathrm{HS}}
\newcommand\law{\mathrm{law}}
\newcommand\lin{\mathrm{lin}}

\newcommand\op{\mathrm{op}}
\newcommand\TV{\mathrm{TV}}

\newcommand{\Ascr}{\mathscr{A}}
\newcommand\Bscr{\mathscr{B}}
\newcommand\Cscr{\mathscr{C}}
\newcommand{\Fscr}{\mathscr{F}}
\newcommand{\Dcal}{\mathcal{D}}
\newcommand{\Wcal}{\mathcal{W}}

\usepackage{url}

\usepackage{marginnote}

\marginparwidth50pt %this avoids the marginal notes to invade the text

\begin{document}
\title[TV distance between stable and Brownian SDEs]{Total variation distance between SDEs with stable noise and Brownian motion}

%\allowdisplaybreaks[4]

%\date{\today}

\author[C.-S.~Deng]{Chang-Song Deng}
\address[C.-S.~Deng]{School of Mathematics and Statistics\\ Wuhan University\\ Wuhan 430072, China}
\email{dengcs@whu.edu.cn}

\author[X.~Li]{Xiang Li}
\address[X.~Li]{Department of Mathematics, Faculty of Science and Technology, University of Macau, Macau S.A.R., China }
\email{yc07904@um.edu.mo}

\author[R.L.~Schilling]{Ren\'{e} L.\ Schilling}
\address[R.L.\ Schilling]{%
TU Dresden, Fakult\"{a}t Mathematik, Institut f\"{u}r Ma\-the\-ma\-ti\-sche Sto\-cha\-stik, 01062 Dresden, Germany}
\email{rene.schilling@tu-dresden.de}

\author[L.~Xu]{Lihu Xu}
\address[L.~Xu]{Department of Mathematics, Faculty of Science and Technology, University of Macau, Macau S.A.R., China }
\email{lihuxu@um.edu.mo}

\begin{abstract}
We consider a $d$-dimensional stochastic differential equation (SDE) of the form $\dup U_t = b(U_t)\,\dup t + \sigma\,\dup Z_t$, let $X_t$ be the solution if the driving noise $Z_t$ is a $d$-dimensional rotationally symmetric $\alpha$-stable process ($1<\alpha<2$), and let $Y_t$ be the solution if the driving noise is a $d$-dimensional Brownian motion.

Continuing the work in \cite{DSX23}, we derive an estimate of the total variation distance $\|\law (X_{t})-\law (Y_{t})\|_\TV$ for all $t>0$, and we show that the ergodic measures $\mu_\alpha$ and $\mu_2$ of $X_t$ and $Y_t$, respectively, satisfy
$$\|\mu_\alpha-\mu_2\|_\TV \leq \frac{Cd\log(1+d)}{\alpha-1}(2-\alpha).$$
We shall show that this bound is optimal with respect to $\alpha$ by an Ornstein--Uhlenbeck SDE. 
Combining this bound with a recent interpolation result from \cite{HRW23}, we can derive a bound in Wasserstein-$p$ distance ($0< p <1$):
\begin{gather*}
       \|\mu_\alpha-\mu_2\|_{W_p} \leq\frac{Cd^{(p+3)/2}\log(1+d)}{\alpha-1} (2-\alpha).
\end{gather*}
{\bf Key Words:} Total variation distance, Wasserstein-$p$ distance, stochastic differential equation,
Poisson equation, stable process.
\end{abstract}

\maketitle

\tableofcontents

\noindent

\section{Introduction}
Let us consider the SDEs
\begin{gather}\label{stableSDE}
    \dup X_t = b(X_t)\,\dup t + \sigma\,\dup L_t, \quad X_0=x,
\\\label{BM-SDE}
    \dup Y_t = b(Y_t)\,\dup t + \sigma\,\dup B_t, \quad Y_0=y,
\end{gather}
driven by a standard $d$-dimensioal Brownian motion $B_t$ and a stable L\'evy process $L_t$ with characteristic function $\Ee\,\eup^{\iup \xi L_t}=\eup^{-t|\xi|^\alpha/2}$, $1<\alpha<2$; this is a standard rotationally symmetric $\alpha$-stable process run at half speed.
Our main assumptions are as follows:
\begin{h1}\label{h1}
$\sigma\in\real^{d\times d}$ is an invertible $d \times d$ matrix.
\end{h1}
\begin{h2}\label{h2}
$b \in C^2(\rd, \rd)$ and there exist constants $\theta_0>0$, $K \ge 0$, $\theta_1 \ge 0$, $\theta_2\geq 0$ such that 
for all $x,y,v,v_1,v_2\in \rd$,
\begin{align}
    \langle x-y, b(x)-b(y)\rangle&\leq-\theta_0|x-y|^2+K,\label{H1-1}\\
    |\nabla_v b(x)|
    &\leq\theta_1|v|, \label{H1-1'}\\
    |\nabla_{v_2}\nabla_{v_1} b(x)|
    &\leq\theta_2|v_1||v_2|. \label{H1-2}
\end{align}
\end{h2}
It is well known that under \eqref{H1-1'} the both SDEs \eqref{stableSDE} and \eqref{BM-SDE} have unique non-explosive (strong) solutions, which we denote by $X^x_t$ and $Y^y_t$ respectively.  From the classical Lyapunov function criterion, see e.g.\ \cite{MeTw09}, we know that under \textup{\textbf{(H1)}} and \textup{\textbf{(H2)}} the solutions to the SDEs \eqref{stableSDE} and \eqref{BM-SDE} are ergodic. Denote by $\mu_\alpha$ and $\mu_2$ the respective ergodic measures of $X^x_t$ and $Y^y_t$.

%We write $X_t^x$ and $Y_t^y$ to stress the initial data $X_0^x=x$ and $Y_0^y=y$.

Continuing the work in Continuing the work in \cite{DSX23}, we establish an optimal bound between $X_t^x$ and $Y_t^y$ in total variation distance and consequently an optimal bound between $\mu_\alpha$ and $\mu_2$ as $t \rightarrow \infty$.  Combining this bound with a recent interpolation result from \cite{HRW23}, we can extend these bounds to the Wasserstein-$p$ distance with $0 < p < 1$.

\subsection{Notation} Before giving our main results, let us first introduce the notation that will be used in sequel.

We denote by $C(\rd, \real^m)$ the set of all continuous functions from $\rd$ to $\real^m$ and by $C_b(\rd, \real)$ the set of all bounded continuous functions from $\rd$ to $\real$. Given $k \in \mathds{N}$,  denote by $C^k(\rd, \real)$ the set of all continuous functions from $\rd$ to $\real$ with continuous $1$st,...,$k$-th order derivatives, and by $C^k_b(\rd, \real)$ the set of all bounded continuous functions from $\rd$ to $\real$ with bounded continuous $1$st,...,$k$-th order derivatives.
Let $f:\rd\to\real$ be sufficiently smooth, the directional derivative in direction $v\in\rd$ is defined as
\begin{gather*}
	\nabla_{v} f(x) := \lim_{\epsilon\to 0} \frac{f(x+\epsilon v)-f(x)}{\epsilon} = \nabla f(x)\cdot v
\end{gather*}
and $\nabla_{v_2}\nabla_{v_1} f(x) := \nabla_{v_2}(\nabla_{v_1} f)(x) = \nabla^2f(x){v_1}\cdot {v_2}$ for $v_1,v_2\in\rd$. The vector and matrix norms of $\nabla f$ and $\nabla^2 f$ are given by
\begin{gather*}
	|\nabla f(x)| = \sup_{v\in\rd, |v|=1}\left|\nabla_v f(x)\right|
\end{gather*}
and
\begin{gather*}
	\|\nabla^2 f(x)\|_\op = \sup_{v_1,v_2\in\rd, |v_1|=|v_2|=1}\left|\nabla_{v_2}\nabla_{v_1} f(x)\right|
=\sup_{v_1,v_2\in\rd, |v_1|=|v_2|=1}\left\langle\nabla^2f(x),v_1v_2^\top\right\rangle_{\rm HS}.
\end{gather*}
Moreover, we define
$$\|\nabla f\|_\infty:=\sup_{x \in \rd} \|\nabla f(x)\|_\op, \ \ \ \|\nabla^2 f\|_{\op,\infty}:=\sup_{x \in \rd} \|\nabla^2 f(x)\|_\op.$$ This notation extends naturally to vector-valued functions $f=(f_{1},f_{2},\cdots,f_{d})^{\top}: \rd\to \rd$:
\begin{align*}
	\nabla f(x)v&=\nabla_{v}f(x)=(\nabla_{v}f_{1}(x),\dots,\nabla_{v}f_{d}(x))^{\top},\\
	\nabla^2f(x)v_1v_2&=\nabla_{v_2}\nabla_{v_1}f(x)= (\nabla_{v_2}\nabla_{v_1}f_{1}(x),\dots,\nabla_{v_2} \nabla_{v_1}f_{d}(x))^{\top}.
\end{align*}
For $x \in \rd$, we see $\nabla f(x) \in \rd$ and $\nabla^2 f(x) \in \real^{d \times d}$. The Hilbert--Schmidt inner product of two matrices $A, B \in \real^{d \times d}$ is $\left\langle A, B\right\rangle_{\mathrm{HS}}=\sum_{i,j=1}^d A_{ij} B_{ij}$, and the Hilbert-Schmidt norm is $\|A\|_{\mathrm{HS}}=\sqrt{\sum_{i,j=1}^dA_{ij}^2}$.

Recall that $X_t^x$ and $Y_t^y$ are the solutions to SDEs \eqref{stableSDE} and \eqref{BM-SDE}  respectively. 
The operator semigroup induced by the Markov process $(X_t^x)_{t\geq 0}$ is
\begin{gather*}
    P_t f(x) = \Ee f(X^x_t),\quad f \in C_b(\rd,\real),\, t>0.
\end{gather*}
We need to study $P_t$ on the Banach space $\left(C_{\lin}(\rd,\real),\|\cdot\|_\lin\right)$ of linearly growing continuous functions
\begin{gather*}
 C_{\lin}(\rd,\real)
 = \left\{f \in C(\rd,\real)\,;\, \|f\|_\lin := \sup_{x \in \rd} \frac{|f(x)|}{1+|x|}<\infty\right\}.
\end{gather*}
By \cite{DSX23}, the semigroup $(P_t)_{t \ge 0}$ is well defined on $C_{\lin}(\rd,\real)$. The infinitesimal generator $\Ascr^P$ of the semigroup $(P_t)_{t\geq 0}$ has the following domain:
\begin{align*}
 \Dcal(\Ascr^P)
 :=\bigg\{f \in C_{\lin}(\rd,\real)\,;\,& g(x)=  \lim_{t\to 0} \frac{P_t f(x)-f(x)}{t} \text{\ \ exists \ for all $x\in\rd$}\\
 & \text{and\ } g \in C_{\lin}(\rd,\real)\bigg\}.
\end{align*}
In the same way we can consider the semigroup $Q_t$ of $(Y_t^y)_{t\geq 0}$ and its generator $\left(\Ascr^Q, \Dcal(\Ascr^Q)\right)$.

Write $\Cscr_b(1) = \{h\in C_b(\rd,\real)\:;\: \|h\|_\infty \leq 1\}$ for the unit ball in $C_b(\rd,\real)$. The total variation distance of two probability measures (on $\rd$) $\mu$ and $\nu$  is given by
\begin{gather*}
   \|\mu-\nu\|_\TV := \sup_{h\in\Cscr_b(1)}|\mu(h)-\nu(h)|
\end{gather*}
(as usual we write $\mu(h):=\int h\,\dup\mu$ etc.). It is clear that $\|\mu-\nu\|_\TV\leq2$ for all probability measures $\mu$ and $\nu$.
 For $p\in(0,1]$, the Wasserstein-$p$ distance for two probability measures $\mu,\nu$  is given by
   \begin{gather*}
        \|\mu-\nu\|_{W_p}=\sup_{|h(x)-h(y)|\leq|x-y|^p}|\mu(h)-\nu(h)|.
   \end{gather*}

Finally, $\Lip(1)$ is the family of Lipschitz functions $f:\rd\to\real$ with Lipschitz constant $1$ and norm
$\|f\|_{\Lip}:=\sup_{x \neq y}\frac{|f(x)-f(y)|}{|x-y|} \le 1$.
\subsection{Main results}

Throughout this paper, $C,C_1,C_2$ denote positive constants which may depend on $\theta_0,\theta_1, \theta_2,K, \|\sigma\|_{\op},\|\sigma^{-1}\|_{\op}$, but they are always independent of $d$ and $\alpha$; their values may change from line to line. Recall that $\mu_\alpha$ and $\mu_2$ are respective ergodic measures of $X^x_t$ and $Y^y_t$ respectively.

\begin{theorem}\label{main1}
	Assume \textup{\textbf{(H1)}} and \textup{\textbf{(H2)}}. For any $\alpha \in (1,2)$, $x,y\in\rd$ and $t>0$,
	\begin{align*}
    	&\|\law (X_{t}^x)-\law (Y_{t}^y)\|_\TV\\
    	&\qquad\leq C_{1} \left\{t^{-1/2} \vee 1\right\}\eup^{-C_{2}t}|x-y| +\frac{Cd\log(1+d)}{\alpha-1}(2-\alpha)\cdot
        \left\{\left(t^{-\frac{1}{\alpha}+\frac{1}{2}}\log\tfrac{1}{t}\right) \vee 1  \right\}.
	\end{align*}
	In particular,
	\begin{gather}\label{was-est}
    	\|\mu_\alpha-\mu_2\|_\TV
    	\leq \frac{Cd\log(1+d)}{\alpha-1}(2-\alpha).
	\end{gather}
\end{theorem}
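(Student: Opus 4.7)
The plan is to insert the intermediate law $\law(Y_t^x)$ via the triangle inequality,
\begin{equation*}
    \|\law(X_t^x)-\law(Y_t^y)\|_\TV
    \leq \|\law(X_t^x)-\law(Y_t^x)\|_\TV + \|\law(Y_t^x)-\law(Y_t^y)\|_\TV,
\end{equation*}
so that the change-of-noise and the change-of-initial-condition contributions are cleanly decoupled. The second summand depends only on the Brownian SDE \eqref{BM-SDE}: under the dissipativity \textbf{(H1)}, the smoothness \textbf{(H2)} and the invertibility of $\sigma$, a Bismut--Elworthy--Li type argument gives $\|\nabla Q_t h\|_\infty \leq C_1(t^{-1/2}\vee 1)\eup^{-C_2 t}\|h\|_\infty$ for every $h\in\Cscr_b(1)$. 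The mean-value inequality then yields precisely the first term $C_1(t^{-1/2}\vee 1)\eup^{-C_2 t}|x-y|$ of the claim.

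For the noise-difference piece $\|\law(X_t^x)-\law(Y_t^x)\|_\TV=\sup_{h\in\Cscr_b(1)}|P_t h(x)-Q_t h(x)|$, I will use the standard semigroup interpolation
\begin{equation*}
    P_t h - Q_t h = \int_0^t P_s(\Ascr^P-\Ascr^Q)Q_{t-s}h\,\dup s,
\end{equation*}
which is legal because $Q_{t-s}h$ lies in a common core of $\Ascr^P$ and $\Ascr^Q$ for every $s<t$ thanks to the Brownian smoothing. Since the two generators share the drift, $\Ascr^P-\Ascr^Q$ reduces to the difference of the $\sigma$-twisted symmetric $\alpha$-stable L\'evy operator and the $\sigma$-twisted Laplacian. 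Combining a L\'evy--Khintchine representation with a second-order Taylor expansion of $g=Q_{t-s}h$ and the asymptotics \eqref{crate} yields the pointwise estimate
\begin{equation*}
    |(\Ascr^P-\Ascr^Q)g(x)|
    \leq \frac{Cd^{3/2}(2-\alpha)}{\alpha-1}\bigl(\|\nabla g\|_\infty + \|\nabla^2 g\|_{\op,\infty}\bigr),
\end{equation*}
where the factor $(2-\alpha)$ is extracted through \eqref{crate} and $1/(\alpha-1)$ comes from the integrability of the L\'evy measure after splitting the Taylor remainder at small and large jumps.

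Plugging in the Brownian smoothing bounds $\|\nabla Q_{t-s}h\|_\infty \leq C((t-s)^{-1/2}\vee 1)\eup^{-C(t-s)}$ and $\|\nabla^2 Q_{t-s}h\|_{\op,\infty}\leq C((t-s)^{-1}\vee 1)\eup^{-C(t-s)}$, and using that $P_s$ is a $C_b$-contraction, the time integral $\int_0^t\cdots\,\dup s$ is a standard bookkeeping exercise. The only subtlety is at $s\uparrow t$, where the cutoff between the gradient-controlled (large-jump) and Hessian-controlled (small-jump) parts of $(\Ascr^P-\Ascr^Q)Q_{t-s}h$ has to be balanced as a function of $t-s$; this balancing produces the envelope $(t^{-1/\alpha+1/2}\log(1/t))\vee 1$ multiplying $\frac{Cd^{3/2}}{\alpha-1}(2-\alpha)$, with the logarithm arising exactly from the $(t-s)^{-1}$ Hessian singularity.

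The ergodic bound \eqref{was-est} then follows by letting $t\to\infty$: both $\law(X_t^x)\to\mu_\alpha$ and $\law(Y_t^y)\to\mu_2$ in total variation by ergodicity, the first summand decays exponentially, and the time-dependent factor in the second stabilises at $1$. The main obstacle is the sharp generator-difference estimate above: it demands combining the L\'evy--Khintchine cancellation with a jump-radius splitting calibrated to the available first- and second-order information on $g$, capturing both the dimensional loss $d^{3/2}$ and the $\alpha$-degeneration $(2-\alpha)/(\alpha-1)$ through \eqref{crate}. Once this pointwise bound is in hand, everything else is a routine integration.
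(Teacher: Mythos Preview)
Your overall architecture---triangle inequality with pivot $\law(Y_t^x)$, the gradient bound \eqref{grad11} for the diffusion piece, and a Duhamel interpolation for $P_th-Q_th$---matches the paper's. The gap is the claimed pointwise estimate
\[
    |(\Ascr^P-\Ascr^Q)g(x)|
    \leq \frac{Cd^{3/2}(2-\alpha)}{\alpha-1}\bigl(\|\nabla g\|_\infty+\|\nabla^2 g\|_{\op,\infty}\bigr),
\]
which is false. A one-dimensional counterexample: with $\sigma=1$ and $g(x)=n^{-2}\cos(nx)$ one has $\|\nabla g\|_\infty=n^{-1}$, $\|\nabla^2 g\|_{\op,\infty}=1$, while $(\Ascr^P-\Ascr^Q)g(0)=\tfrac12(1-n^{\alpha-2})\to\tfrac12$ as $n\to\infty$, which does \emph{not} vanish as $\alpha\uparrow 2$. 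Structurally, in the decomposition $\mathsf{J}_{11}+\mathsf{J}_{12}+\mathsf{J}_2$ of Lemma~\ref{genediff} only $\mathsf{J}_{11}$ and $\mathsf{J}_2$ carry a factor $(2-\alpha)$ via \eqref{crate}; for $\mathsf{J}_{12}$ the Hessian difference can only be bounded by $2\|\nabla^2 g\|_{\op,\infty}|\sigma z|^2$, and $\int_{|z|<1}|z|^2A(d,\alpha)|z|^{-d-\alpha}\,\dup z=A(d,\alpha)\omega_{d-1}/(2-\alpha)$ has the $(2-\alpha)$ in the denominator. No radius-splitting fixes this with first and second derivatives alone, because the Laplacian part $\tfrac12\langle\nabla^2 g,\sigma\sigma^\top\rangle_\HS$ must be cancelled by the full leading-order integral over $\{|z|<1\}$. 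Consequently, if you treat $P_s$ as a mere $C_b$-contraction you are left with an integrand of order $(t-s)^{-1}$ near $s\uparrow t$, which diverges; the logarithm you predict cannot arise this way.

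The paper's route is to \emph{use} the smoothing of the stable semigroup, not discard it. Section~2 establishes via Malliavin calculus on the subordinated-Brownian representation the key estimate (Proposition~\ref{2ndgrad})
\[
    \|P_{t-s}\nabla^2 f\|_{\HS}\le C(t-s)^{-1/\alpha}\sqrt{d}\,\|\nabla f\|_\infty,
\]
which trades one derivative of $f$ for the integrable singularity $(t-s)^{-1/\alpha}$. Applying this with $f(\cdot)$ replaced by $Q_sh(\cdot+r\sigma z)-Q_sh(\cdot)$ converts the Hessian difference inside $\mathsf{J}_{12}$ into $\|\nabla\{Q_sh(\cdot+r\sigma z)-Q_sh(\cdot)\}\|_\infty\le\min\{2\|\nabla Q_sh\|_\infty,\|\nabla^2 Q_sh\|_{\op,\infty}|z|\}$. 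Splitting at $|z|=\sqrt{s}$ now produces the factor $1-\tfrac{1}{3-\alpha}s^{1-\alpha/2}$, and Lemma~\ref{beta} shows that after the $s$-integration this becomes $(2-\alpha)\log\tfrac1t$ times a Beta-type integral of order $(\alpha-1)^{-1}$. So both the rate $(2-\alpha)$ and the envelope $t^{-1/\alpha+1/2}\log\tfrac1t$ come from Proposition~\ref{2ndgrad}, which is the missing ingredient in your sketch.
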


We shall show that the bound in \eqref{was-est} is optimal with respect to $\alpha$ by a one-dimensional Ornstein--Uhlenbeck system, cf.\ Proposition~\ref{ourate} below.

Applying a recent interpolation result established in \cite{HRW23}, we obtain the following corollary.
\begin{corollary} \label{corollary}
Assume \textup{\textbf{(H1)}} and \textup{\textbf{(H2)}}. For any $p\in(0,1)$ and $\alpha\in(1,2)$,
 \begin{gather*}
      \|\mu_\alpha-\mu_2\|_{W_p} \leq\frac{Cd^{(p+3)/2}\log(1+d)}{\alpha-1} (2-\alpha).
   \end{gather*}
\end{corollary}

%Lesemarke

\section{A key gradient estimate for Theorem \ref{main1}}

This section is devoted to proving the following gradient estimate by an integration by parts in Malliavin calculus, which is crucial for the proof of our main result.
\begin{proposition} \label{2ndgrad}
    Assume \eqref{H1-1'} and \eqref{H1-2}. For any $t\in(0,1]$, $\alpha\in(0,2)$ and $f\in C^{2}_{b}(\rd,\real)$,
    \begin{align*}
        \|P_t\nabla^2f\|_{\op,\infty}
        \le C\sqrt{\Gamma\left(1+\tfrac{2}{\alpha}\right)} \,2^{1/\alpha}t^{-1/\alpha}\|\nabla f\|_{\infty}.
    \end{align*}
\end{proposition}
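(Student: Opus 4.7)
My plan is to reduce the Hilbert--Schmidt norm of $(P_t\nabla^2 f)(x)=\Ee[\nabla^2 f(X_t^x)]$ to a Fisher-information estimate on the transition density of $X_t^x$, and then to evaluate that Fisher information via subordination. Because $\sigma$ is invertible, $L_t$ has a smooth density, and \textup{\textbf{(H2)}} makes $b\in C^2_b$, the transition density $p_t(x,\cdot)$ of $X_t^x$ is smooth and decays rapidly at infinity. Integrating by parts in $y$ entry-wise gives
\[
  (P_t\nabla^2 f)(x)_{ij}=-\int\partial_j f(y)\,\partial_{y_i}p_t(x,y)\,\dup y=-\Ee\bigl[\partial_j f(X_t^x)\,\partial_{y_i}\log p_t(x,y)\big|_{y=X_t^x}\bigr],
\]
and applying Cauchy--Schwarz entry-wise and summing over $(i,j)$ yields
\[
  \|(P_t\nabla^2 f)(x)\|_{\HS}^2 \leq \|\nabla f\|_\infty^2\cdot \Fscr\bigl(p_t(x,\cdot)\bigr),
  \quad \Fscr(p):=\int\frac{|\nabla p|^2}{p}\,\dup y.
\]
The task is thereby reduced to proving $\Fscr(p_t(x,\cdot))\leq C\,d\,\Gamma(1+2/\alpha)\,2^{2/\alpha}\,t^{-2/\alpha}$.

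For this I would use the subordination representation $L_t=B_{S_t}$, where $B$ is a standard $d$-dimensional Brownian motion independent of the $\tfrac{\alpha}{2}$-stable subordinator $S$. In the drift-free case the density $p_t^{\sigma L}$ is a Gaussian mixture $\int p^{\mathrm G}_s\,f_{S_t}(s)\,\dup s$ with $p^{\mathrm G}_s$ the density of $N(0,s\sigma\sigma^\top)$. Since $p\mapsto\int|\nabla p|^2/p$ is convex (being the integral of the perspective of $|\cdot|^2$), Jensen's inequality together with the explicit Fisher information of a Gaussian gives
\[
  \Fscr\bigl(p_t^{\sigma L}\bigr)\leq\int\frac{\mathrm{tr}((\sigma\sigma^\top)^{-1})}{s}\,f_{S_t}(s)\,\dup s\leq d\,\|\sigma^{-1}\|_{\op}^2\,\Ee[S_t^{-1}].
\]
A Laplace-transform calculation based on $\Ee\eup^{-\lambda S_t}=\exp(-ct\lambda^{\alpha/2})$, where $c$ is fixed by the normalization $\Ee\eup^{\iup\xi L_t}=\eup^{-t|\xi|^\alpha/2}$, yields $\Ee[S_t^{-1}]=\Gamma(1+2/\alpha)\cdot(ct)^{-2/\alpha}$, producing exactly the factor $\Gamma(1+2/\alpha)\cdot 2^{2/\alpha}\cdot t^{-2/\alpha}$ up to an absolute constant, and the $\sqrt d$ comes from $\mathrm{tr}((\sigma\sigma^\top)^{-1})\leq d\|\sigma^{-1}\|_{\op}^2$ together with the square root in the HS-bound.

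The passage to the general SDE with drift will be handled on $(0,1]$ either by a Duhamel/Girsanov-type comparison of $p_t(x,\cdot)$ with the drift-free density, or equivalently by a Bismut-type representation of the score $\nabla_y\log p_t(x,y)$ using the derivative flow $J_t=\nabla X_t^x$; by \textup{\textbf{(H2)}} one has $\|J_t\|_{\op}\leq\eup^{\theta_1 t}$, and either approach should give $\Fscr(p_t(x,\cdot))\leq \eup^{C\theta_1 t}\Fscr(p_t^{\sigma L})$ on $(0,1]$, completing the proof after substitution. The main obstacle will be precisely this drift step: one must verify that the drift perturbation, although bounded in a pathwise sense via $J_t$, does not inflate either the sharp dimensional factor $\sqrt d$ or the short-time scaling $t^{-1/\alpha}$ in the Fisher information, and since $p_t(x,\cdot)$ is not available in closed form, a perturbative or Malliavin-type computation tailored to the stable noise is unavoidable.
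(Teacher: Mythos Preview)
Your strategy coincides with the paper's at the structural level: both remove one derivative from $\partial_i\partial_j f$ by an integration by parts, bound the resulting weight in $L^2$, and pick up the factor $\sqrt{\Ee[S_t^{-1}]}=\sqrt{\tfrac12\Gamma(1+\tfrac2\alpha)}\,2^{1/\alpha}t^{-1/\alpha}$ from the subordinator; the $\sqrt d$ arises in both arguments from the sum over $i$. Your Fisher-information framing together with the convexity of $p\mapsto\int|\nabla p|^2/p$ is a clean alternative for the drift-free step; the paper instead derives a pointwise $L^2$-smoothing estimate $|P_t(\partial_i g)(x)|\le C\sqrt{\Gamma(1+\tfrac2\alpha)}\,2^{1/\alpha}t^{-1/\alpha}\sqrt{P_t|g|^2(x)}$ and then takes $g=\partial_j f$.

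The drift step is where your sketch has a gap, and one of your two suggested routes does not work. Girsanov is not available here: $L_t$ is pure jump, so there is no continuous martingale part to absorb the term $b(X_t)\,\dup t$ by an equivalent change of measure, and a density comparison of the form $\Fscr(p_t(x,\cdot))\le \eup^{C\theta_1 t}\Fscr(p_t^{\sigma L})$ cannot be obtained that way. Your Bismut/Malliavin route is the correct one, and it is exactly what the paper implements: freeze the subordinator path $\ell$, work on Wiener space for the time-changed SDE, choose $u_i$ so that $D_{u_i}Z_t=e_i$ via the derivative flow $J_{s,t}$ (controlled by $\eup^{\theta_1 t}$ under \textup{\textbf{(H2)}}), and obtain $|\Ee^\Ww[\partial_i g(X_t^{x;\ell})]|\le C(\ell_t^{-1/2}+t)\eup^{2\theta_1 t}\sqrt{\Ee^\Ww|g|^2}$; unfreezing by Cauchy--Schwarz over $\mu_\Ss$ turns $\ell_t^{-1}$ into $\Ee[S_t^{-1}]$. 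One minor inaccuracy: for $\alpha<2$ the transition density decays only polynomially, like $|y|^{-d-\alpha}$, not ``rapidly''; this still suffices for your integration by parts since $\nabla f$ is bounded, but finiteness of $\Fscr(p_t(x,\cdot))$ should come from the Malliavin bound rather than from a decay argument.
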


\subsection{A random time-change method}
This method was originally proposed by X.~Zhang \cite{Zha13} and, in a similar context used in our publication \cite{CDSX23}. In order to keep the presentation self-contained, we recall the basic ingredients here. Using Bochner's subordination, we can in law represent the stable L\'evy process $(L_t)_{t\geq 0}$ with characteristic function $\exp(-\frac 12 t |\xi|^\alpha)$ as a time-changed Brownian motion $(W_{S_t})_{t\geq 0}$, where $(W_t)_{t\geq 0}$ is a standard Brownian motion with characteristic function $\exp(-\frac 12 t |\xi|^2)$ and $(S_t)_{t\geq 0}$ is an independent stable subordinator (i.e.\ an $\real^+$-valued L\'evy process) withthe following Laplace transform:
\begin{gather*}
    \Ee\left[\eup^{-rS_t}\right] = \eup^{-2^{-1}t (2r)^{\alpha/2}},\quad r>0,\,t\geq 0,
\end{gather*}
see e.g.\ \cite{CKSV22,SSZ12, Zha13}. Since we are interested only in distributional properties of the solution to the SDE \eqref{stableSDE}, we may replace it by
\begin{align}\label{stableSDE-2}
	\dif X_{t}=b(X_{t})\,\dif t+\sigma\,\dif W_{S_{t}}, \quad X_{0}=x.
\end{align}
This allows us to work with the canonical realizations of Brownian motion and the subordinator on the product probability space $\left(\Ww\times\Ss,\Bscr(\Ww)\otimes\Bscr(\Ss), \mu_{\Ww}\times\mu_{\Ss}\right)$, where $\Ww$ and $\Ss$ are the canonical path spaces of Brownian motion: $\Ww$ are the continuous functions $w:[0,\infty)\to\rd$ with $w(0)=0$ and $\Ss$ are the right-continuous increasing functions $\ell:[0,\infty) \to [0,\infty)$ with $\ell(0)=0$. In particular, we can calculate the expectation of the solution $X_{t}^{x}$ to the SDE \eqref{stableSDE-2} as
\begin{gather}\label{e:subord}
    \Ee f\left(X_{t}^{x}\right)
    =\int_{\Ss}\int_{\Ww} f\left(X_{t}^{x}(w\circ\ell)\right)\mu_{\Ww}(\dif w) \, \mu_{\Ss}(\dif \ell).
\end{gather}
This means that we can freeze the path of the subordinator and do all calculations for a Brownian motion which is time-changed with a deterministic time-change $\ell = (\ell_t)_{t\geq 0}\in\Ss$. After that, we only  need to make sure that our results remain stable under the integration with respect to $\mu_{\Ss}(\dif \ell)$. If needed, we write $\Ee^\Ww$ and $\Ee^\Ss$ for the expectation taken only in the $\Ww$ or $\Ss$ component.

For fixed $\ell\in\Ss$, denote by $X_{t}^{\ell;x}$ the solution to the SDE
\begin{align}\label{ellSDE}
    \dif X_{t}^{x;\ell}=b(X_{t}^{x;\ell})\,\dif t + \sigma\,\dif W_{\ell_{t}-\ell_{0}},
    \quad X_{0}^{x;\ell}=x.
\end{align}
By construction, $W_{\ell_{t}}$ is a c\`adl\`ag $\Fscr_{\ell_{t}}^{\Ww}$-martingale. Thus, \textup{\textbf{(H1)}} and \textup{\textbf{(H2)}} guarantee that the SDE \eqref{ellSDE} admits a unique c\`adl\`ag $\Fscr_{\ell_{t}}^{\Ww}$-adapted solution $X_{t}^{x;\ell}$ for each initial value $x\in\rd$.

For technical reasons, we have to regularize $\ell\in\Ss$. Fix $\epsilon\in(0,1)$ and define
\begin{align*}
    \ell_{t}^{\epsilon}
    := \frac{1}{\epsilon}\int_{t}^{t+\epsilon}\ell_{s}\,\dif s + \epsilon t
    = \int_{0}^{1}\ell_{\epsilon s+t}\,\dif s + \epsilon t.
\end{align*}
It is not hard to see that $\ell_t^\epsilon\downarrow \ell_t$ ($t\geq 0$) as $\epsilon\downarrow 0$ and $t\mapsto \ell_{t}^{\epsilon}$ is absolutely continuous and strictly increasing. In slight abuse of notation we write $X_{t}^{x;\ell^\epsilon}$ for the solution to the SDE \eqref{ellSDE} with $\ell$ replaced by $\ell^\epsilon$.

Let $\gamma^{\epsilon}$ be the inverse function of $\ell^{\epsilon}$, i.e., $\ell^\epsilon\circ\gamma^{\epsilon}_{t} = t$ for all $t\geq \ell_{0}^{\epsilon}$ and $\gamma^\epsilon\circ\ell^{\epsilon}_t=t$ for all $t\geq 0$. By construction, $t\mapsto\gamma_{t}^{\epsilon}$ is absolutely continuous on $[\ell_{0}^{\epsilon},\infty)$. Define
\begin{gather*}
    Z_{t}^{x;\ell^{\epsilon}}
    := X_{\gamma_{t}^{\varepsilon}}^{x;\ell^{\epsilon}}, \quad t\geq \ell_{0}^{\epsilon}.
\end{gather*}
From \eqref{ellSDE} (with $\ell\rightsquigarrow\ell^\epsilon$) and with a change of variables, we see for $t\geq \ell_{0}^{\epsilon}$,
\begin{equation}\label{Z}
    Z_{t}^{x;\ell^{\epsilon}}
    = x+\int_{0}^{\gamma_{t}^{\epsilon}} b\left(X_{s}^{x;\ell^{\epsilon}}\right) \dif s + \sigma W_{t-\ell^\epsilon_0}
    = x+\int_{\ell_{0}^{\epsilon}}^{t} b\left(Z_{s}^{x;\ell^{\epsilon}}\right)\dot{\gamma}_{s}^{\epsilon}\,\dif s+\sigma W_{t-\ell^\epsilon_0}.
\end{equation}

\subsection{Integration by parts via Malliavin Calculus}
For fixed $t>\ell^{\epsilon}_{0}$, consider the Hilbert space $H:=L^{2}([\ell^{\epsilon}_{0},t];\rd)$ equipped with the inner product
\begin{gather*}
    \langle f, g \rangle_{H}:= \int_{\ell^{\epsilon}_{0}}^{t} \langle f(s),g(s) \rangle\,\dup s\quad  \text{for $f,g\in H$}.
\intertext{Let $W=\{W(h)\,;\,h\in H \}$ be an isonormal Gaussian process associated with $H$, i.e.}
    \Ee^{\Ww}[W(f)W(g)]=\langle f, g \rangle_{H},\quad f,g\in H,
\end{gather*}
which is defined on the Wiener space $\left(\Ww,\Bscr(\Ww),\mu_{\Ww}\right)$. For the $i$th standard coordinate vector $e_{i}\in\rd$, $i=1,2,\dots,d$, it is clear that $W(e_{1}\I_{[\ell^{\epsilon}_{0},t]}(s)),\dots ,W(e_{d}\I_{[\ell^{\epsilon}_{0},t]}(s))$ are pairwise independent one-dimensional standard Brownian motions.

Let $D$ denote the Malliavin derivative; the directional Malliavin derivative of a random variable $F\in \Dd^{1,2}$ (the domain of
$D$ in $L^{2}(\Ww,\Bscr(\Ww),\mu_{\Ww})$) along $u\in H$ is
\begin{gather*}
	D_{u}F:=\langle DF, u \rangle_{H}
	= \int_{\ell^{\epsilon}_{0}}^t \langle  D F(s),u(s)\rangle\,\dif s
\end{gather*}
with  $DF \in L^2(\Ww, H)$. For a $d$-dimensional random vector $F=(F_{1},\dots, F_{d})^{\top}$ with $F_{i}\in \Dd^{1,2}$ for
each $1\leq i\leq d$, we set $DF:=(DF_{1},\dots DF_{d})^{\top}$ and $D_{u}F= \left(D_{u}F_{1}, \dots, D_{u} F_{d} \right)^{\top}$.

Let $\delta$ be the divergence operator, i.e.\ the adjoint of $D$. It is well known that, for any $u\in \Dom(\delta)$ and $F\in \mathds{D}^{1,2}$, $\delta(u)$ satisfies
\begin{align}\label{duality}
    \Ee^\Ww\left[F\delta(u) \right]=\Ee^\Ww[\langle DF,u \rangle_{H}].
\end{align}
The following auxiliary lemma is crucial for proving Proposition \ref{2ndgrad}.
\begin{lemma}\label{graepsi}
	Assume \eqref{H1-1'} and \eqref{H1-2}, and fix $\ell\in\Ss$ and $\epsilon\in(0,1)$. For any $v\in\real^d$, $t>0$, $f\in C^{1}_{b}( \rd,\real)$ and $x\in\rd$,
    \begin{align*}
    \left| \Ee^\Ww \left[\nabla_v f\left(X^{x;\ell^{\epsilon}}_{t} \right)\right]\right|
    \leq C|v|\left( \frac{1}{\sqrt{\ell^{\epsilon}_{t}-\ell^{\epsilon}_{0}}} + t\right)
    \eup^{2\theta_{1} t} \sqrt{\Ee^\Ww\left[\left|f(X^{x;\ell^{\epsilon}}_{t}) \right|^{2}\right]}.
    \end{align*}
\end{lemma}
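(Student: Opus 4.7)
The plan is to apply Malliavin's integration-by-parts for the time-changed process $Z_r^{x;\ell^\epsilon}$ of \eqref{Z} (recall $X_t^{x;\ell^\epsilon}=Z_{\ell_t^\epsilon}^{x;\ell^\epsilon}$), transferring the Euclidean partial derivative $\partial_i$ onto a divergence. Setting $T:=\ell_t^\epsilon$, I work in the paper's Malliavin framework on $H=L^2([\ell_0^\epsilon,T];\rd)$. The strategy is to find $u\in H\cap\Dom(\delta)$ such that $D_u Z_T^{x;\ell^\epsilon}=e_i$; then the chain rule gives $(\partial_i f)(Z_T^{x;\ell^\epsilon}) = D_u\bigl[f(Z_T^{x;\ell^\epsilon})\bigr]$, and duality \eqref{duality} combined with Cauchy--Schwarz yields
\[
	\bigl|\Ee^\Ww[(\partial_i f)(Z_T^{x;\ell^\epsilon})]\bigr|
	\le \sqrt{\Ee^\Ww[|f(Z_T^{x;\ell^\epsilon})|^2]}\,\sqrt{\Ee^\Ww[|\delta(u)|^2]},
\]
so everything reduces to a sharp second-moment estimate for $\delta(u)$.

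To build $u$, I would differentiate \eqref{Z} in the initial value: the Jacobian $J_r:=\nabla_x Z_r^{x;\ell^\epsilon}$ solves the linear matrix ODE $\dot J_r=\nabla b(Z_r^{x;\ell^\epsilon})J_r\dot\gamma_r^\epsilon$ with $J_{\ell_0^\epsilon}=I$, and a standard derivation gives the Malliavin derivative $D_r Z_T^{x;\ell^\epsilon}=J_T J_r^{-1}\sigma$ for $r\in[\ell_0^\epsilon,T]$. Under \eqref{H1-1'}, Gronwall's inequality (after the change of variables $r\mapsto\gamma_r^\epsilon$, which takes advantage of $\gamma_T^\epsilon=t$) gives $\|J_T J_r^{-1}\|_{\op}\vee\|J_r J_T^{-1}\|_{\op}\le\eup^{\theta_1 t}$ uniformly in $r$. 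I then take
\[
	u(r):=\frac{1}{T-\ell_0^\epsilon}\,\sigma^{-1}J_r J_T^{-1}e_i,\qquad r\in[\ell_0^\epsilon,T],
\]
so that $D_u Z_T^{x;\ell^\epsilon}=\int_{\ell_0^\epsilon}^T(J_T J_r^{-1}\sigma)\,u(r)\,\dup r = e_i$ directly.

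Since $J_T$ depends on the whole trajectory, $u$ is not adapted and $\delta(u)$ must be treated as a Skorohod integral; I invoke the standard Meyer-type second-moment bound
\[
	\Ee^\Ww[|\delta(u)|^2]\le \Ee^\Ww[\|u\|_H^2]+\Ee^\Ww[\|Du\|_{\HS(H\otimes H)}^2].
\]
The first term is immediate from the operator bound above and yields $\Ee^\Ww[\|u\|_H^2]\le C\eup^{C\theta_1 t}/(T-\ell_0^\epsilon)$, which after square-rooting produces the $1/\sqrt{\ell_t^\epsilon-\ell_0^\epsilon}$ summand of the claim.

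The main technical obstacle is the $Du$ term. Computing $D_s u(r)$ requires differentiating both $J_r$ and $J_T^{-1}$ in the Malliavin sense, which leads to an auxiliary second-variation equation driven by $\nabla^2 b(Z_\cdot)$; this is the unique place where the Hessian bound \eqref{H1-2} is invoked. A further Gronwall argument, followed by the change of variables back to the original time, should yield an $L^\infty$ estimate $\|D_s u(r)\|_{\op}\lesssim\eup^{C\theta_1 t}$ uniformly in $s,r\in[\ell_0^\epsilon,T]$, whence $\Ee^\Ww[\|Du\|_{\HS}^2]\le Ct^2\eup^{C\theta_1 t}$ and square-rooting generates the additive $t$ summand. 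Combining the two pieces and absorbing the $\sigma$-dependent constants into $C$ gives the stated inequality. The tightest part of the argument is this second-variation computation together with the careful bookkeeping of the time-change factor $\dot\gamma_r^\epsilon$ under the integrals; crucially, the final bound is independent of the regularization parameter $\epsilon$, which is what will allow the limit $\epsilon\downarrow 0$ in subsequent uses of the lemma.
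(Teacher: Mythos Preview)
Your plan matches the paper's proof essentially line for line: the same direction $u(r)=\frac{1}{T-\ell_0^\epsilon}\sigma^{-1}J_rJ_T^{-1}e_i$ (this is exactly the paper's $u_i(s)=\frac{1}{t-\ell_0^\epsilon}\sigma^{-1}J_{s,t}^{-1}e_i$), the same duality and Meyer-type bound $\Ee[\delta(u)^2]\le\Ee[\|u\|_H^2]+\Ee[\|Du\|_{H\otimes H}^2]$, and the same appeal to \eqref{H1-2} for the second-variation term.

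One bookkeeping slip to fix: as written, the pointwise estimate $\|D_su(r)\|_{\op}\lesssim\eup^{C\theta_1 t}$ does \emph{not} give $\|Du\|_{H\otimes H}^2\le Ct^2\eup^{C\theta_1 t}$; integrating a uniform bound over $[\ell_0^\epsilon,T]^2$ produces $(T-\ell_0^\epsilon)^2=(\ell_t^\epsilon-\ell_0^\epsilon)^2$, not $t^2$. What actually happens (and what the paper does) is that the second-variation formula for $D_sJ_{\cdot}^{-1}$ carries the factor $\int\dot\gamma_\rho^\epsilon\,\dup\rho\le\gamma_T^\epsilon=t$, so the correct pointwise bound is $\|D_su(r)\|_{\op}\lesssim\frac{t}{T-\ell_0^\epsilon}\,\eup^{2\theta_1 t}$; the prefactor $(T-\ell_0^\epsilon)^{-1}$ from the definition of $u$ then cancels the $(T-\ell_0^\epsilon)^2$ from the double integral, leaving exactly the $t^2$ you want. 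With this correction your argument goes through and is identical to the paper's.
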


\begin{proof}
By \eqref{Z}, the Malliavin derivative of $Z^{x;\ell^{\epsilon}}_{t}$ along the direction $u\in H$ satisfies
\begin{gather*}
    \dif D_{u}Z^{x;\ell^{\epsilon}}_{t}
    = \left[\nabla b\left(Z^{x;\ell^{\epsilon}}_{t}\right) \dot{\gamma}^{\epsilon}_{t}D_{u}Z^{x;\ell^{\epsilon}}_{t}+\sigma u(t) \right]\,\dif t,
    \quad t\ge \ell^{\epsilon}_{0}.
\end{gather*}
Since $D_{u}Z^{x;\ell^{\epsilon}}_{\ell^{\epsilon}_{0}}=0$, this equation has a solution of the form
\begin{gather} \label{DZ}
    D_{u}Z^{x;\ell^{\epsilon}}_{t}
    = \int_{\ell^{\epsilon}_{0}}^{t}  J_{s,t}\sigma  u(s)\,\dif s,
\end{gather}
where
\begin{gather*}
    J_{s,t}
    := \exp\left(\int_{s}^{t} \nabla b\left(Z^{x;\ell^{\epsilon}}_{r}\right) \dot{\gamma}^{\epsilon}_{r} \,\dif r \right),
    \quad \ell^{\epsilon}_{0}\leq s\leq t.
\end{gather*}
For any fixed $t>\ell^{\epsilon}_{0}$ and $v\in\real^d$, let
\begin{gather} \label{ui}
    u_0(s):=\frac{1}{t-\ell^{\epsilon}_{0}}\,\sigma^{-1}J_{s,t}^{-1}v.
\end{gather}
It can be easily verified that
\begin{gather*}
    D_{u_{0}}Z^{x;\ell^{\epsilon}}_{t}=v.
\end{gather*}
Combining this with the duality \eqref{duality} between the Malliavin derivative and the divergence operator gives us, for any $ f\in C^{1}_{b}(\rd,\real)$,
\begin{align*}
    \Ee^\Ww\left[\nabla_v f\left(Z^{x;\ell^{\epsilon}}_{t} \right)\right]
    &=\Ee^\Ww\left[\big\langle \nabla f\left(Z^{x;\ell^{\epsilon}}_{t} \right),v \big\rangle \right]
    =\Ee^\Ww\left[\big\langle \nabla f\left(Z^{x;\ell^{\epsilon}}_{t} \right), D_{u_{0}}Z^{x;\ell^{\epsilon}}_{t} \big\rangle \right]\\
    &=\Ee^\Ww\left[D_{u_{0}}f\left(Z^{x;\ell^{\epsilon}}_{t} \right)  \right]
    =\Ee^\Ww\left[f\left(Z^{x;\ell^{\epsilon}}_{t} \right) \delta\left( u_{0}\right) \right].
\end{align*}
By the Cauchy--Schwarz inequality and \cite[Proposition 1.3.1]{N06}, we obtain
\begin{align}\label{gradvf}
\begin{split}
    \left| \Ee^\Ww\left[ \nabla_v f\left(Z^{x;\ell^{\epsilon}}_{t} \right)\right]\right|
    &\leq \sqrt{\Ee^\Ww\left[\left|f\left(Z^{x;\ell^{\epsilon}}_{t}\right) \right|^{2}\right]}
    \sqrt{\Ee^\Ww\left[ \delta(u_{0})^{2}\right]}\\
    &\leq \sqrt{\Ee^\Ww\left[\left|f\left(Z^{x;\ell^{\epsilon}}_{t}\right)\right|^{2}\right]}
    \sqrt{\Ee^\Ww\left[\|u_{0}\|_{H}^{2}+\|Du_{0}\|_{H\otimes H}^{2}\right]},
\end{split}
\end{align}
where $\|\cdot\|_{H\otimes H}$ denotes the norm on the tensor space $H\otimes H$. By \eqref{H1-1'} and \eqref{ui}, one has
\begin{align} \label{Unorm}
    \begin{split}
        \|u_{0}\|_{H}^{2}
    &=\frac{1}{(t-\ell^{\epsilon}_{0})^{2}}\int_{\ell^{\epsilon}_{0}}^{t} \left| \sigma^{-1}\exp\left(-\int_{s}^{t} \nabla b\left(Z^{x;\ell^{\epsilon}}_{r}\right) \dot{\gamma}^{\epsilon}_{r} \,\dif r \right)v \right|^{2} \dif s\\
    &\le \frac{\|\sigma^{-1}\|_{\op}^{2}|v|^2}{(t-\ell^{\epsilon}_{0})^{2}}\int_{\ell^{\epsilon}_{0}}^{t} \exp\left(2\theta_{1}\int_{s}^{t}  \dot{\gamma}^{\epsilon}_{r}\,\dif r \right) \dif s\\
    &\le \frac{\|\sigma^{-1}\|_{\op}^{2}|v|^2}{t-\ell^{\epsilon}_{0}} \,\eup^{2\theta_{1}\gamma^{\epsilon}_{t}}.
    \end{split}
\end{align}
Similarly, using \eqref{H1-1'} and \eqref{DZ} gives for any $u\in H$
\begin{align}\label{DVZ}
    \left| D_{u}Z^{x;\ell^{\epsilon}}_{t}\right|
    \leq \|\sigma\|_{\op} \|u\|_{H}\eup^{\theta_{1} \gamma^{\epsilon}_{t}} \sqrt{t-\ell^{\epsilon}_{0}}.
\end{align}
Furthermore, by the chain rule for matrix exponential functions (see e.g.\ \cite{N-H}), one has
\begin{align*}
    \begin{split}
         D_{u}J_{s,t}^{-1}
    &=\int_{0}^{1} \exp\left(-\tau\int_{s}^{t} \nabla b\left(Z^{x;\ell^{\epsilon}}_{r}\right) \dot{\gamma}^{\epsilon}_{r}\,\dif r \right)  \left(-\int_{s}^{t}\nabla^{2}b\left(Z^{x;\ell^{\epsilon}}_{r}\right) D_{u}Z^{x;\ell^{\epsilon}}_{r} \dot{\gamma}^{\epsilon}_{r} \,\dif r\right)\\
    &\qquad\qquad\qquad\qquad\qquad\qquad\qquad\quad\mbox{}\times \exp\left(-(1-\tau)\int_{s}^{t} \nabla b\left(Z^{x;\ell^{\epsilon}}_{r}\right) \dot{\gamma}^{\epsilon}_{r}\, \dif r \right)\dif \tau,
    \end{split}
\end{align*}
which, together with \eqref{H1-1'}, \eqref{H1-2} and \eqref{DVZ}, yields that
\begin{align*}
    \|D_{u}J_{s,t}^{-1}\|_{\op,\infty}\le \|\sigma\|_{\op}\theta_{2}\|u\|_{H}\sqrt{t-\ell^{\epsilon}_{0}}\gamma^{\epsilon}_{t}
    \eup^{2\theta_{1}\gamma^{\epsilon}_{t}}, \quad s\in [\ell^{\epsilon}_{0},t].
\end{align*}
Then, it follows from \eqref{ui} that
\begin{align*}
        \|D_{u}u_{0}\|^{2}_{H}
    &=\frac{1}{(t-\ell^{\epsilon}_{0})^{2}}\int_{\ell^{\epsilon}_0}^{t}  \left| \sigma^{-1}\left\{D_{u}J_{s,t}^{-1}\right\}v \right|^{2} \dif s\\
    &\le \|\sigma^{-1}\|_{\op}^2\|\sigma\|_{\op}^2\theta_{2}^{2}\|u\|_{H}^{2}(\gamma^{\epsilon}_{t})^{2}
    \eup^{4\theta_{1} \gamma^{\epsilon}_{t}}|v|^2.
   \end{align*}
Since this holds for any $u\in H$, one can deduce that
\begin{align*}
    \|Du_{0}\|^{2}_{H\otimes H}\le \|\sigma^{-1}\|_{\op}^2\|\sigma\|_{\op}^2\theta_{2}^{2}(\gamma^{\epsilon}_{t})^{2}
    \eup^{4\theta_{1} \gamma^{\epsilon}_{t}}|v|^2.
\end{align*}
Combining this with \eqref{gradvf} and \eqref{Unorm} gives us for $t\ge \ell^{\epsilon}_{0}$,
\begin{align*}
    &\left| \Ee^\Ww \left[\nabla_{v} f\left(Z^{x;\ell^{\epsilon}}_{t} \right)\right]\right|\\
    &\quad\le
    \left(\frac{\|\sigma^{-1}\|_{\op}|v|}{\sqrt{t-\ell^{\epsilon}_{0}}}\,\eup^{\theta_{1} \gamma^{\epsilon}_{t}}
    +\|\sigma^{-1}\|_{\op}\|\sigma\|_{\op}\theta_{2}\gamma^{\epsilon}_{t}
    \eup^{2\theta_{1} \gamma^{\epsilon}_{t}}|v|
    \right)\sqrt{\Ee^\Ww\left[\big|f\left(Z^{x;\ell^{\epsilon}}_{t}\right) \big|^{2}\right]}\\
    &\quad\leq C|v|\left(\frac{1}{\sqrt{t-\ell^{\epsilon}_{0}}}+\gamma^{\epsilon}_{t}
    \right)\eup^{2\theta_{1} \gamma^{\epsilon}_{t}}
    \sqrt{\Ee^\Ww\left[\big|f\left(Z^{x;\ell^{\epsilon}}_{t}\right) \big|^{2}\right]}.
\end{align*}
Since $Z_{\ell_t^\epsilon}^{x;\ell^\epsilon}=X_{t}^{x;\ell^\epsilon}$, we can replace $t$ by $\ell_t^\epsilon$ and get the desired estimate.
\end{proof}

\subsection{Proof of Proposition \ref{2ndgrad}}

\begin{lemma}\label{grafixed}
	Assume \eqref{H1-1'} and \eqref{H1-2}. For any $\alpha\in(0,2)$, $t\in(0,1]$ and $f\in C^{1}_{b}( \rd,\real)$,
    \begin{align*}
        \left\| P_{t}\nabla f\right\|_\infty
        \leq C\sqrt{\Gamma\left(1+\tfrac{2}{\alpha}\right)} \,2^{1/\alpha}t^{-1/\alpha}
    	\|f\|_\infty.
    \end{align*}
\end{lemma}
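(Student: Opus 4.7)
The strategy is to integrate the conditional estimate from Lemma \ref{graepsi} against the law $\mu_{\Ss}$ of the subordinator, and then compute an explicit negative moment of $S_t$. As a preliminary step, I would pass to the limit $\epsilon\downarrow 0$ in Lemma \ref{graepsi}: since $\ell^\epsilon_0\downarrow 0$ and $\ell^\epsilon_t\downarrow\ell_t$ for each $\ell\in\Ss$, and $X^{x;\ell^\epsilon}_t\to X^{x;\ell}_t$ by standard stability of SDEs driven by the time-changed Brownian motion, we obtain, for $\mu_{\Ss}$-a.e.\ $\ell$ (using that $\ell_0=0$) and every $t\in(0,1]$,
\begin{gather*}
    \left|\Ee^\Ww\left[\partial_i f\left(X^{x;\ell}_t\right)\right]\right|
    \leq C\left(\frac{1}{\sqrt{\ell_t}}+t\right)\eup^{2\theta_1 t}
    \sqrt{\Ee^\Ww\left[\left|f\left(X^{x;\ell}_t\right)\right|^2\right]}.
\end{gather*}

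Next, I would apply the subordination representation \eqref{e:subord} to $\partial_i f$, use the triangle inequality, and then invoke the Cauchy--Schwarz inequality in $\mu_{\Ss}$ to separate the pathwise prefactor from the $L^2$-factor. Combining this with Fubini and \eqref{e:subord} applied to $|f|^2$ yields
\begin{gather*}
    \left|P_t(\partial_i f)(x)\right|
    \leq C\,\eup^{2\theta_1 t}
    \left(\int_\Ss \left(\frac{1}{\sqrt{\ell_t}}+t\right)^2 \mu_{\Ss}(\dup\ell)\right)^{1/2}\!\!
    \sqrt{P_t|f|^2(x)}.
\end{gather*}
Bounding $(a+b)^2\leq 2a^2+2b^2$ reduces matters to estimating the negative moment $\int_\Ss \ell_t^{-1}\mu_{\Ss}(\dup\ell)=\Ee[S_t^{-1}]$.

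For the latter I would use the identity $S_t^{-1}=\int_0^\infty \eup^{-rS_t}\,\dup r$ together with the Laplace transform in the excerpt to get $\Ee[S_t^{-1}]=\int_0^\infty\eup^{-2^{-1}t(2r)^{\alpha/2}}\dup r$; the substitution $u=2^{-1}t(2r)^{\alpha/2}$ produces
\begin{gather*}
    \Ee[S_t^{-1}]=2^{2/\alpha-1}t^{-2/\alpha}\,\Gamma\!\left(1+\tfrac{2}{\alpha}\right).
\end{gather*}
Taking square roots yields the factor $2^{1/\alpha}t^{-1/\alpha}\sqrt{\Gamma(1+2/\alpha)}$; since $t\in(0,1]$ and $\Gamma(1+2/\alpha)\geq 1$ for $\alpha\in(0,2)$, the residual $\sqrt{2}\,t$ coming from the $t^2$ term is dominated by a constant multiple of $2^{1/\alpha}t^{-1/\alpha}\sqrt{\Gamma(1+2/\alpha)}$, and $\eup^{2\theta_1 t}\leq \eup^{2\theta_1}$ is absorbed into $C$. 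This yields the claimed bound. The only non-routine point is the $\epsilon\downarrow 0$ limit justifying the pathwise inequality for $\mu_{\Ss}$-a.e.\ $\ell$, which is the standard obstacle when promoting Malliavin estimates from the absolutely continuous regularization $\ell^\epsilon$ back to the rough subordinator path.
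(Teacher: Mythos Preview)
Your proposal is correct and follows essentially the same route as the paper: pass to the limit $\epsilon\downarrow 0$ in Lemma~\ref{graepsi} (the paper cites \cite[Lemma~2.2]{Zha13} for this step), integrate against $\mu_{\Ss}$ using Cauchy--Schwarz, and plug in the negative moment $\Ee[S_t^{-1}]=\tfrac12\,\Gamma(1+\tfrac{2}{\alpha})\,2^{2/\alpha}t^{-2/\alpha}$ (which the paper quotes from \cite[Lemma~4.1]{DS19}, while you derive it directly from the Laplace transform). The residual $t$-term is absorbed exactly as you describe for $t\in(0,1]$.
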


\begin{proof}
Letting $\epsilon\downarrow0$ in Lemma \ref{graepsi} and using $\ell^\epsilon_t\downarrow\ell_t$ and \cite[Lemma 2.2]{Zha13}, we get
for $t>0$,
\begin{align*}
    \left| \Ee^\Ww \left[\nabla_{v} f\left(X^{x;\ell}_{t} \right)\right]\right|
    \le
    C|v|\left(\ell_t^{-1/2}+t\right) \eup^{2\theta_{1} t} \sqrt{\Ee^\Ww\left[\left|f(X^{x;\ell}_{t}) \right|^{2}\right]}.
\end{align*}
Now we ``unfreeze'' the fixed path $\ell$ and use \eqref{e:subord}. From the Cauchy--Schwarz inequality
we get for all $t>0$, $v\in\real^d$ and $x\in\real^d$,
\begin{align*}
    \left|P_t(\nabla_vf)(x)\right|
    &=\left|
    \int_{\Ss}\Ee^\Ww \left[\nabla_{v} f\left(X^{x;\ell}_{t} \right)\right]
     \,\mu_{\Ss}(\dif\ell)
    \right|\\
    &\leq\int_{\Ss}\big| \Ee^\Ww \left[\nabla_{v} f\left(X^{x;\ell}_{t} \right)\right]\big|
    \,\mu_{\Ss}(\dif\ell)\\
    &\leq C|v|\eup^{2\theta_{1} t}\int_{\Ss}\left(\ell_t^{-1/2}+t
    \right)\sqrt{\Ee^\Ww\left[\big|f(X^{x;\ell}_{t}) \big|^{2}\right]}
    \,\mu_{\Ss}(\dif\ell)\\
    &\leq C|v|\eup^{2\theta_{1} t}\sqrt{\int_{\Ss}\left(\ell_t^{-1/2}+t
    \right)^2\,\mu_{\Ss}(\dif\ell)}\,
    \sqrt{\int_{\Ss}\Ee^\Ww\left[\big|f(X^{x;\ell}_{t}) \big|^{2}\right]
    \,\mu_{\Ss}(\dif\ell)}\\
    &\leq C|v|\eup^{2\theta_{1} t}\sqrt{\int_{\Ss}\left(\ell_t^{-1}+t^2\right)\,\mu_{\Ss}(\dif\ell)}
    \,\sqrt{\Ee\left[\big|f(X^{x}_{t}) \big|^{2}\right]}\\
    &=C|v|\eup^{2\theta_{1} t}\sqrt{\Ee\left[S_t^{-1}\right]+t^2}\,\sqrt{P_t|f|^2(x)}\\
    &\leq C|v|\eup^{2\theta_{1} t}\left(\sqrt{\Ee\left[S_t^{-1}\right]}+t\right)\|f\|_\infty.
\end{align*}
From \cite[Lemma 4.1]{DS19} we know the following moment identity for $\alpha\in(0,2)$
\begin{equation}\label{smoment}
    \Ee \left[S_t^{-1}\right]
    = \frac12\,\Gamma\left(1+\tfrac{2}{\alpha}\right)2^{2/\alpha}
    t^{-2/\alpha},
    \quad t>0.
\end{equation}
Substituting this identity into the previous inequality, we obtain the claimed estimate for $t\in(0,1]$.
\end{proof}

\begin{proof}[\bfseries Proof of Proposition \ref{2ndgrad}]
    It follows from Lemma \ref{grafixed} that for
    all $t\in(0,1]$ and $\alpha\in(0,2)$,
    \begin{align*}
        \|P_t\nabla^2f\|_{\op,\infty}
        &=\sup_{x\in\real^d}\sup_{v_1,v_2\in\real^d,|v_1|=|v_2|=1}
        \left|P_t\nabla_{v_2}\nabla_{v_1}f(x)\right|\\
        &\leq C\sqrt{\Gamma\left(1+\tfrac{2}{\alpha}\right)} \,2^{1/\alpha}t^{-1/\alpha}
    	\sup_{v_1\in\real^d,|v_1|=1}
         \|\nabla_{v_1}f\|_\infty.
    \end{align*}
    Finally,
    \begin{gather*}
        \sup_{v\in\real^d,|v|=1}\|\nabla_vf\|_\infty=
        \sup_{x\in\real^d}\sup_{v\in\real^d,|v|=1}\left|\langle\nabla f(x),v\rangle\right|
        =\|\nabla f\|_\infty,
    \end{gather*}
   and this completes the proof.
\end{proof}

\section{Gradient estimates of $Q_t$ and comparison of the generators}

In this section we collect further tools for the proof of Theorem~\ref{main1}. Recall that $\Cscr_b(1)$ is the unit ball in $C_b(\rd,\real)$.
\begin{lemma}\label{TV-gradient}
    Assume \textup{\textbf{(H1)}} and \textup{\textbf{(H2)}}.  For all $h\in \Cscr_b(1)$ and $t> 0$,
    \begin{align}
        \|\nabla Q_{t}h\|_{\infty}&\le C_{1} \left\{t^{-1/2} \vee 1\right\} \eup^{-C_{2}t},\label{grad11}\\
        \|\nabla^{2}Q_{t}h\|_{\op,\infty} &\le C_{1} \left\{t^{-1} \vee 1\right\}\eup^{-C_{2}t},\label{grad22}
    \end{align}
    where both $C_1$ and $C_2$ do not depend on the dimension $d$.
\end{lemma}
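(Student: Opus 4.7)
The plan is to split into two time regimes---short time $t\in(0,1]$, where a Bismut--Malliavin integration-by-parts formula for \eqref{BM-SDE} produces the singular prefactors $t^{-1/2}$ and $t^{-1}$, and long time $t>1$, where the decay factor $\eup^{-C_2 t}$ is extracted from a dimension-free Wasserstein-1 contraction of $Q_t$ inherited from \textup{\textbf{(H1)}}.

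For the short-time regime I would repeat the Malliavin argument of Lemma~\ref{graepsi} applied directly to the Brownian SDE \eqref{BM-SDE}, which is simpler because there is no subordinator to average over: one takes the trivial time change $\ell_t\equiv t$, so $\dot{\gamma}_t\equiv 1$ and $J_{s,t}=\exp(\int_s^t\nabla b(Y_r^x)\,\dif r)$ with $\|J_{s,t}\|_\op\le\eup^{\theta_1(t-s)}$ by \eqref{H1-1'}. The Bismut direction $u_i(s)=t^{-1}\sigma^{-1}J_{s,t}^{-1}e_i$ of \eqref{ui}, combined with the Cauchy--Schwarz estimate \eqref{gradvf} and the bound $\Ee[\|u_i\|_H^2+\|Du_i\|_{H\otimes H}^2]\le C(t^{-1}+1)\eup^{4\theta_1 t}$, then gives, for $1\le i\le d$,
\[|\partial_iQ_th(x)|\le C(t^{-1/2}+t)\eup^{2\theta_1 t}\|h\|_\infty\]
with $d$-independent constants, i.e.\ $\|\nabla Q_th\|_\infty\le Ct^{-1/2}\|h\|_\infty$ on $(0,1]$. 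For \eqref{grad22} I would iterate via $Q_t=Q_{t/2}\circ Q_{t/2}$, applying a second Bismut IBP to the outer factor acting on the Lipschitz function $Q_{t/2}h$; this yields $\|\nabla^2 Q_th\|_{\op,\infty}\le Ct^{-1}\|h\|_\infty$ on $(0,1]$.

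For the long-time regime I would couple the semigroup decomposition $Q_t=Q_{t-1}\circ Q_1$ with the dimension-free Wasserstein-1 contraction
\[W_1(\delta_xQ_s,\delta_yQ_s)\le C\eup^{-c_0 s}|x-y|,\qquad s>0,\]
which follows from the scalar monotonicity $\tfrac{\dif}{\dif t}|Y_t^x-Y_t^y|^2\le-2\theta_0|Y_t^x-Y_t^y|^2+2K$ produced by \textup{\textbf{(H1)}} under a synchronous coupling combined with a reflection-coupling correction inside the ball $\{|x-y|\le\sqrt{K/\theta_0}\}$, as in the companion paper \cite{DSX23}; the rate $c_0>0$ depends only on $\theta_0,K,\|\sigma\|_\op,\|\sigma^{-1}\|_\op$. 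The short-time bound says that $g:=Q_1h$ is Lipschitz with $\|\nabla g\|_\infty\le C\|h\|_\infty$, and Kantorovich--Rubinstein duality then gives
\[|Q_th(x)-Q_th(y)|\le\|\nabla g\|_\infty\,W_1(\delta_xQ_{t-1},\delta_yQ_{t-1})\le C\eup^{-C_2 t}\|h\|_\infty|x-y|,\]
which is \eqref{grad11} for $t>1$. For \eqref{grad22} at long times I would use the three-step decomposition $Q_t=Q_1\circ Q_{t-2}\circ Q_1$ (for $t>2$), so that the middle factor contracts the Lipschitz seminorm of $Q_1h$ down to $C\eup^{-c_0(t-2)}\|h\|_\infty$ by the argument just given, and the outer $Q_1$ converts this smallness of the Lipschitz constant into smallness of the Hessian via the Bismut-IBP analysis of the short-time step. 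Taking the pointwise maximum of the two regimes yields the prefactors $t^{-1/2}\vee 1$ and $t^{-1}\vee 1$.

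The main obstacle is the $d$-independence of the decay rate $C_2$: under \textup{\textbf{(H1)}} with $K>0$ the pointwise derivative process $\nabla Y_t^x$ may grow as $\eup^{\theta_1 t}$, so the exponential decay cannot come from a derivative-flow argument and must be extracted from the distributional contraction, whose rate depends only on the scalar constants $\theta_0,K,\|\sigma\|_\op,\|\sigma^{-1}\|_\op$. The long-time second-order bound is then the most delicate step, since converting ``small Lipschitz seminorm'' into ``small Hessian'' through a short-time $Q_1$ must be done in a way that only uses the Lipschitz data of the input, not its full sup norm; the three-step decomposition accomplishes exactly this.
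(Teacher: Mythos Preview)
Your plan is correct and mirrors the paper's proof: short-time smoothing plus a dimension-free Wasserstein-1 contraction, glued by semigroup decompositions. The paper streamlines two of your steps. For the short-time bounds it does not re-run a Malliavin argument but simply cites \cite{P-W} for $\|\nabla Q_th\|_\infty\le Ct^{-1/2}$ on $(0,1]$ and \cite[Lemma~2.1]{DSX23} for the Lipschitz-to-Hessian estimate $\|\nabla^2 Q_s f\|_{\op,\infty}\le Cs^{-1/2}\|f\|_{\Lip}$; for the contraction it cites Eberle \cite{E16}. For the long-time Hessian the paper uses the simpler two-step splitting $Q_t=Q_1\circ Q_{t-1}$ (for $t>2$): once \eqref{grad11} is proved, $Q_{t-1}h$ already has Lipschitz constant $\le C_1\eup^{-C_2(t-1)}$, so one application of the Lipschitz-to-Hessian bound at time $1$ finishes directly---your three-step $Q_1\circ Q_{t-2}\circ Q_1$ works but the inner $Q_1$ is redundant. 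One small caution on presentation: to keep the constants $d$-free you need the Bismut estimate for the directional derivative $\nabla_v Q_th$ with an arbitrary unit vector $v$, not merely the coordinate partials $\partial_i$; the argument you sketch delivers this without change, but as written (``for $1\le i\le d$'') it would cost a factor $\sqrt d$ when passing to $|\nabla Q_th(x)|$.
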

\begin{proof}
	According to \cite[Corollary 2]{E16}, for any $t>0$ and $x,y\in \rd$,
    \begin{gather*}
        \sup_{f\in\Lip(1)}\left|Q_{t}f(x)-Q_{t}f(y)\right|
        \le C_{1}\eup^{-C_{2}t}|x-y|,
    \end{gather*}
    which means that for any $f\in \Lip(1)$,
    \begin{gather*}
        \| \nabla Q_{t}f \|_{\infty}\le C_{1}\eup^{-C_{2}t} \quad \text{for all $t>0$}.
    \end{gather*}
    On the other hand, it follows from \cite{P-W} that for all $h\in \Cscr_b(1)$ and $t\in (0,1]$,
    \begin{gather*}
        \|\nabla Q_{t}h\|_{\infty}\le C t^{-1/2},
    \end{gather*}
    which implies that $Q_{t}h$ is a Lipschitz function with Lipschitz constant $C t^{-1/2}$. Hence, if $t\in (0,1]$, \eqref{grad11} holds. If $t>1$, we have
    \begin{gather*}
        \|\nabla Q_{t}h\|_{\infty}
        =\|\nabla Q_{t-1}\{Q_{1}h\}\|_{\infty}
        \le CC_1\eup^{-C_2(t-1)}=CC_1\eup^{C_2}\eup^{-C_2t},
    \end{gather*}
    which yields \eqref{grad11}.

    To prove \eqref{grad22}, we first note that by \cite[Lemma 2.1, (2.2)]{DSX23}, for any $f\in \Lip(1)$ and $t\in (0,1]$,
    \begin{gather*}
        \|\nabla^{2} Q_{t}f\|_{\op,\infty}\le C t^{-1/2}.
    \end{gather*}
    So by \eqref{grad11}, if $t\in (0,2]$, it holds that for $h\in \Cscr_b(1)$
    \begin{align*}
        \|\nabla^{2}Q_{t}h\|_{\op,\infty}
        = \|\nabla^{2} Q_{t/2}\{ Q_{t/2}h\}\|_{\op,\infty}
        & \leq C_1(t/2)^{-1/2}\eup^{-C_2t/2}C(t/2)^{-1/2}\\
        &= 2C_1Ct^{-1}\eup^{-C_2t/2},
    \end{align*}
    while for $t>2$ we have for $h\in \Cscr_b(1)$
    \begin{gather*}
        \|\nabla^{2}Q_{t}h\|_{\op,\infty} = \|\nabla^{2} Q_{1}\{ Q_{t-1}h\}\|_{\op,\infty}\le
        C_1\eup^{-C_2(t-1)}C=C_1C\eup^{C_2}\eup^{-C_2t}.
    \end{gather*}
    This completes the proof of \eqref{grad22}.
\end{proof}

In the following lemma, we will frequently need the constants
\begin{gather*}
    A(d,\alpha):=\frac{\alpha\Gamma(\frac{d+\alpha}{2})}
    {2^{2-\alpha}\pi^{d/2}\Gamma(1-\frac\alpha2)},\quad
    \omega_{d-1}
    :=\frac{2\pi^{d/2}}{\Gamma\left(\frac{d}{2}\right)},
\end{gather*}
which can be estimated as follows (see \cite[Lemma 3.3]{DSX23}):
\begin{gather}\label{crate}
        \frac{A(d,\alpha)\omega_{d-1}}{d(2-\alpha)}\leq C
        \quad\text{and}\quad
        \left| \frac{A(d,\alpha)\omega_{d-1}}{d(2-\alpha)} - 1\right|
        \leq C(2-\alpha)\log(1+d).
\end{gather}

\begin{lemma}\label{genediff}
    Assume \textup{\textbf{(H1)}} and \textup{\textbf{(H2)}}.
    \begin{enumerate}
    \item\label{genediff-a}
    For any $\alpha \in (1,2)$ and $0< s<t\le 1$,
    \begin{align*}
        &\sup_{h\in\Cscr_b(1)} \left\|P_{t-s}(\Ascr^P-\Ascr^Q)Q_{s}h\right\|_{\infty}\\
        &\qquad\leq C(t-s)^{-1/\alpha}\,s^{-1/2} \left(3-\alpha-\frac{1}{3-\alpha}\,s^{1-\frac{\alpha}{2}}\right) d\log(1+d)
        + \frac{C}{\alpha-1}(2-\alpha)s^{-1/2}d.
    \end{align*}

    \item\label{genediff-b}
    For any $\alpha \in (1,2)$, $0< s<t\le 1$ and $k=1,2,\dots$,
    \begin{gather*}
        \sup_{h\in\Cscr_b(1)} \left\|P_{t-s}(\Ascr^P-\Ascr^Q)Q_{s+k}h\right\|_{\infty}
    	\leq
    	C_1(2-\alpha)\left((t-s)^{-1/\alpha}+\frac{1}{\alpha-1}\right) \eup^{-C_2k}d\log(1+d).
    \end{gather*}
    \end{enumerate}
\end{lemma}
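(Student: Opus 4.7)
The plan is to expand $(\Ascr^{P}-\Ascr^{Q})g$ pointwise and then bound $P_{t-s}$ applied to the result using Proposition~\ref{2ndgrad} and Lemma~\ref{TV-gradient}. Setting $g:=Q_{s}h$ in \ref{genediff-a} and $g:=Q_{s+k}h$ in \ref{genediff-b}, and using the rotational-symmetry identity
\[
    \int_{|z|<1}\langle\nabla^{2}g(y)\sigma z,\sigma z\rangle\frac{A(d,\alpha)}{|z|^{d+\alpha}}\dif z=c\operatorname{tr}(\sigma\sigma^{\top}\nabla^{2}g(y)),\quad c:=\frac{A(d,\alpha)\omega_{d-1}}{d(2-\alpha)},
\]
I would split
\[
    (\Ascr^{P}-\Ascr^{Q})g(y)=E_{1}(y)+T_{2}(y)+(c-1)\,\tfrac{1}{2}\operatorname{tr}(\sigma\sigma^{\top}\nabla^{2}g(y)),
\]
where $T_{2}$ is the large-jump integral on $\{|z|\geq1\}$ and $E_{1}:=\int_{|z|<1}\!\!\int_{0}^{1}(1-\theta)\langle(\nabla^{2}g(\cdot+\theta\sigma z)-\nabla^{2}g(\cdot))\sigma z,\sigma z\rangle\dif\theta\,\frac{A(d,\alpha)}{|z|^{d+\alpha}}\dif z$ is the Taylor remainder. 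By \eqref{crate}, $|c-1|\leq C(2-\alpha)\log(1+d)\leq C(2-\alpha)\sqrt{d}$.

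The two auxiliary pieces are routine. The mean value theorem gives $|T_{2}|\leq C\|\nabla g\|_{\infty}d(2-\alpha)/(\alpha-1)$ via $\int_{1}^{\infty}r^{-\alpha}\dif r=1/(\alpha-1)$ and \eqref{crate}; together with the gradient bound from Lemma~\ref{TV-gradient} this produces the $\frac{C(2-\alpha)}{\alpha-1}s^{-1/2}d$ contribution in part \ref{genediff-a} and the $\frac{C(2-\alpha)}{\alpha-1}d^{3/2}\eup^{-C_{2}k}$ contribution in part \ref{genediff-b}. For the $(c-1)$ piece I would pull the trace outside via $|\operatorname{tr}(\sigma\sigma^{\top}M)|\leq\|\sigma\sigma^{\top}\|_{\HS}\|M\|_{\HS}\leq C\sqrt{d}\|M\|_{\HS}$, apply Proposition~\ref{2ndgrad} in the form $\|P_{t-s}\nabla^{2}g\|_{\HS,\infty}\leq C(t-s)^{-1/\alpha}\sqrt{d}\|\nabla g\|_{\infty}$, and use $(2-\alpha)\log(1+d)\leq C(2-\alpha)\sqrt{d}$ to get a bound of the order $Cd^{3/2}(2-\alpha)(t-s)^{-1/\alpha}\|\nabla g\|_{\infty}$.

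The delicate step is $E_{1}$. Setting $g_{\theta z}(\cdot):=g(\cdot+\theta\sigma z)$, Fubini gives
\[
    P_{t-s}E_{1}(x)=\int_{|z|<1}\!\!\int_{0}^{1}(1-\theta)\bigl\langle(\sigma z)(\sigma z)^{\top},P_{t-s}\nabla^{2}(g_{\theta z}-g)(x)\bigr\rangle_{\HS}\dif\theta\,\frac{A(d,\alpha)}{|z|^{d+\alpha}}\dif z,
\]
and the crucial trick is to apply Proposition~\ref{2ndgrad} to the \emph{difference} $g_{\theta z}-g\in C_{b}^{2}$ rather than to $g$ itself, obtaining
\[
    \|P_{t-s}\nabla^{2}(g_{\theta z}-g)\|_{\HS,\infty}\leq C(t-s)^{-1/\alpha}\sqrt{d}\,\|\nabla(g_{\theta z}-g)\|_{\infty}.
\]
I would then interpolate between the trivial bound $\|\nabla(g_{\theta z}-g)\|_{\infty}\leq2\|\nabla g\|_{\infty}$ and the MVT bound $\|\nabla(g_{\theta z}-g)\|_{\infty}\leq\|\nabla^{2}g\|_{\op,\infty}\theta|\sigma z|$. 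For part \ref{genediff-a} I split the $|z|$-integration at $r_{0}=\sqrt{s}$, applying the MVT bound (with $\|\nabla^{2}Q_{s}h\|_{\op,\infty}\leq Cs^{-1}$) on $\{|z|<\sqrt{s}\}$ and the trivial bound (with $\|\nabla Q_{s}h\|_{\infty}\leq Cs^{-1/2}$) on $\{\sqrt{s}\leq|z|<1\}$; the radial integrals $\int_{0}^{\sqrt{s}}r^{2-\alpha}\dif r=\frac{s^{(3-\alpha)/2}}{3-\alpha}$ and $\int_{\sqrt{s}}^{1}r^{1-\alpha}\dif r=\frac{1-s^{1-\alpha/2}}{2-\alpha}$ collapse into the bracket $\bigl[1-\frac{s^{1-\alpha/2}}{3-\alpha}\bigr]$, and adding the $(2-\alpha)$ from the $(c-1)$ piece yields $(3-\alpha)-\frac{s^{1-\alpha/2}}{3-\alpha}$ via the algebraic identity $1+(2-\alpha)=3-\alpha$. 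For part \ref{genediff-b}, the uniform bounds $\|\nabla Q_{s+k}h\|_{\infty},\|\nabla^{2}Q_{s+k}h\|_{\op,\infty}\leq C\eup^{-C_{2}k}$ (available from Lemma~\ref{TV-gradient} since $s+k\geq1$) allow the MVT bound alone with cutoff $r_{0}=1$; the single integral $\int_{0}^{1}r^{2-\alpha}\dif r=\frac{1}{3-\alpha}$ combined with \eqref{crate} delivers the advertised $(2-\alpha)$ factor. The main obstacle is this bookkeeping: choosing $r_{0}=\sqrt{s}$ and tracking constants carefully so that the two contributions to $|P_{t-s}E_{1}|$ recombine, after inclusion of the $(c-1)$-term, into precisely $(3-\alpha)-\frac{s^{1-\alpha/2}}{3-\alpha}$. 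Applying Proposition~\ref{2ndgrad} to $g_{\theta z}-g$ (not $g$) is the structural move that makes the MVT interpolation supply the extra $|z|$ needed for the correct integration rate.
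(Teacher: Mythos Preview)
Your proposal is correct and follows essentially the same route as the paper: the decomposition into the large-jump piece $T_2$ (the paper's $\mathsf{J}_2$), the $(c-1)$ trace piece (the paper's $\mathsf{J}_{11}$), and the Taylor remainder $E_1$ (the paper's $\mathsf{J}_{12}$) is identical, as are the key moves---applying Proposition~\ref{2ndgrad} to the \emph{difference} $g_{\theta z}-g$, interpolating via $\min\{2\|\nabla g\|_\infty,\|\nabla^2 g\|_{\op,\infty}|\sigma z|\}$, splitting the radial integral at $r_0=\sqrt{s}$ in part~\ref{genediff-a}, and recombining the $E_1$ and $(c-1)$ contributions through $1+(2-\alpha)=3-\alpha$. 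The only cosmetic difference is that the paper keeps the factor $d\log(1+d)$ in the $\mathsf{J}_{11}$ bound and absorbs it into $d^{3/2}$ at the very end, whereas you apply $\log(1+d)\leq C\sqrt{d}$ one step earlier.
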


\begin{proof}
We begin with some general preparations. Let $h\in\Cscr_b(1)$ and set $f:=Q_{s+k}h$ for $k\in\{0,1,2,\dots\}$. It is not hard to see that the generators of the semigroups $(P_t)_{t\geq 0}$ and $(Q_t)_{t\geq 0}$ are given by
\begin{gather*}
    \Ascr^Pf(x)
    = \langle\nabla f(x),b(x)\rangle
    + \int\limits_{\mathclap{\rd\setminus\{0\}}} \left[f(x+\sigma z)-f(x)-\langle \nabla f(x),\sigma z\rangle \I_{(0,1)}(|z|) \right]
    \frac{A(d,\alpha)}{|z|^{d+\alpha}}\,\dup z,
\intertext{and}
    \Ascr^Qf(x)
    = \langle\nabla f(x),b(x)\rangle + \frac12\,\langle\nabla^2f(x),\sigma\sigma^{\top}\rangle_{\HS}.
\end{gather*}
Therefore,
\begin{align*}
    &\left(\Ascr^P-\Ascr^Q\right)f(x)\\
    &=\left\{ \int_{|z| < 1}\left[f(x+\sigma z)-f(x) - \langle\nabla f(x),\sigma z\rangle\right] \frac{A(d,\alpha)}{|z|^{d+\alpha}}\,\dup z
    - \frac12\,\langle\nabla^2f(x),\sigma\sigma^{\top} \rangle_{\HS}\right\}\\
    &\quad\mbox{}+\int_{|z|\geq 1}\left[f(x+\sigma z)-f(x)\right] \frac{A(d,\alpha)}{|z|^{d+\alpha}}\,\dup z\\
    &=:\mathsf{J}_1(x)+\mathsf{J}_2(x).
\end{align*}
By the first inequality in \eqref{crate}, we get for all $\alpha \in (1,2)$,
\begin{align} \label{jh43dds}
    \begin{split}
    |\mathsf{J}_2(x)|
    &\leq \int_{|z|\geq 1}\left|f(x+\sigma z)-f(x)\right| \frac{A(d,\alpha)}{|z|^{d+\alpha}}\,\dup z\\
    &\leq  \|\nabla f\|_{\infty}\|\sigma\|_{\op}\int_{|z|\geq 1} |z|\,\frac{A(d,\alpha)}{|z|^{d+\alpha}}\,\dup z\\
    &= \|\nabla f\|_{\infty}\|\sigma\|_{\op}\frac{A(d,\alpha)\omega_{d-1}}{\alpha-1}\\
    &\leq \frac{C}{\alpha-1}\,\|\nabla f\|_{\infty}(2-\alpha) d.
\end{split}
\end{align}
\begin{comment}
Moreover,
\begin{align*}
    |Q_{t-s}\mathsf{J}_2(x)|
    &=\left|\int_{|z|>1}\left(
    \int_0^1\langle Q_{t-s}\nabla f(x+r\sigma z),\sigma z\rangle\,\dup r
    \right)
    \frac{A(d,\alpha)}{|z|^{d+\alpha}}\,\dup z
    \right|\\
    &\leq C\|Q_{t-s}\nabla f\|_\infty\int_{|z|>1} |z|\frac{A(d,\alpha)}{|z|^{d+\alpha}}\,\dup z\\
    &=C\|Q_{t-s}\nabla f\|_\infty\frac{A(d,\alpha)\omega_{d-1}}{\alpha-1}\\
    &\leq C_{\alpha_0}\frac{1}{\sqrt{t-s}}\,(2-\alpha)d^4\log(1+d).
\end{align*}
\end{comment}
We rewrite $\mathsf{J}_1(x)$ in the following form:
\begin{align*}
    &\mathsf{J}_1(x)\\
    &= \int_{|z| < 1}\left[f(x+\sigma z)-f(x) -\langle\nabla f(x),\sigma z\rangle\right] \frac{A(d,\alpha)}{|z|^{d+\alpha}}\,\dup z
    - \frac12\,\langle\nabla^2f(x),\sigma\sigma^{\top}\rangle_{\HS}\\
    &= \int_{|z| < 1}\int_0^1\langle\nabla^2f(x+r\sigma z),(\sigma z)(\sigma z)^{\top}\rangle_{\HS}(1-r)\,\dup r \frac{A(d,\alpha)}{|z|^{d+\alpha}}\,\dup z
    - \frac12\,\langle\nabla^2f(x),\sigma\sigma^{\top}\rangle_{\HS}\\
    &= \left\{\int_{|z| < 1}\int_0^1\langle\nabla^2f(x),(\sigma z)(\sigma z)^{\top}\rangle_{\HS}(1-r)\,\dup r \frac{A(d,\alpha)}{|z|^{d+\alpha}}\,\dup z
    - \frac12\,\langle\nabla^2f(x),\sigma\sigma^{\top}\rangle_{\HS}\right\}\\
    &\qquad\mbox{} + \int_{|z| < 1}\int_0^1\langle\nabla^2f(x+r\sigma z)-\nabla^2f(x), (\sigma z)(\sigma z)^{\top}\rangle_{\HS}(1-r)\,\dup r\, \frac{A(d,\alpha)}{|z|^{d+\alpha}}\,\dup z\\
    &=: \mathsf{J}_{11}(x)+\mathsf{J}_{12}(x).
\end{align*}
Using the symmetry of the measure $\rho(\dif z) = |z|^{-d-\alpha}\,\dif z$, it is clear that $\int_{|z|\leq 1} z_i z_j\,\rho(\dif z) = \delta_{ij} \frac 1d \int_{|z|\leq 1} |z|^2\,\rho(\dif z)$, and so we get
\begin{align*}
    &\int_{|z| < 1}\int_0^1\langle\nabla^2f(x),(\sigma z)(\sigma z)^{\top}\rangle_{\HS}(1-r)\,\dup r\frac{A(d,\alpha)}{|z|^{d+\alpha}}\,\dup z\\
    &\qquad= \frac12\int_{|z| < 1}\langle\nabla^2f(x),\sigma(zz^{\top})\sigma^{\top}\rangle_{\HS}\frac{A(d,\alpha)}{|z|^{d+\alpha}}\,\dup z\\
    &\qquad=\frac12\,\frac{1}{d}\int_{|z| < 1} \langle\nabla^2f(x),|z|^2\sigma\sigma^{\top}\rangle_{\HS}\frac{A(d,\alpha)}{|z|^{d+\alpha}}\,\dup z\\
    &\qquad=\frac{1}{2d}\,\langle\nabla^2f(x),\sigma\sigma^{\top}\rangle_{\HS}\int_{|z|<1}\frac{A(d,\alpha)}{|z|^{d+\alpha-2}}\,\dup z\\
    &\qquad=\frac{A(d,\alpha)\omega_{d-1}}{2d(2-\alpha)}\,\langle\nabla^2f(x),\sigma\sigma^{\top}\rangle_{\HS}.
\end{align*}
Combining this with \eqref{crate}, Proposition \ref{2ndgrad} and the fact that $\|\sigma\sigma^\top\|_\HS\leq\sqrt{d}\|\sigma\sigma^\top\|_\op\leq\sqrt{d}\|\sigma\|_\op^2$, we obtain for all $\alpha \in (1,2)$,
\begin{align} \label{kj654fg}
\begin{split}
    |P_{t-s}\mathsf{J}_{11}(x)|
    &= \left|\frac12\left[\frac{A(d,\alpha)\omega_{d-1}}{d(2-\alpha)}-1\right]\big\langle P_{t-s} \nabla^2f(x),\sigma\sigma^{\top}\big\rangle_{\HS}\right|\\
    &\leq\frac12\left|\frac{A(d,\alpha)\omega_{d-1}}{d(2-\alpha)}-1\right|
    \|P_{t-s}\nabla^{2} f(x)\|_{\HS}\|\sigma\sigma^\top\|_{\HS}\\
    &\leq\frac12\left|\frac{A(d,\alpha)\omega_{d-1}}{d(2-\alpha)}-1\right|
    d\|P_{t-s}\nabla^{2} f(x)\|_{\op,\infty}\|\sigma\|_\op^2\\
    &\leq C(2-\alpha)(t-s)^{-1/\alpha}\|\nabla f\|_\infty  d\log(1+d).
\end{split}
\end{align}
Let us now estimate $P_{t-s}\mathsf{J}_{12}(x)$.
It follows from Proposition \ref{2ndgrad} that for any $r\in[0,1]$ and $\alpha \in (1,2)$,
\begin{align*}
    &\left|\left\langle P_{t-s}\left[\nabla^2\left(f(x+r\sigma z)-f(x)\right) \right],
    (\sigma z)(\sigma z)^{\top}\right\rangle_{\HS} \right|\\
    &\quad\le\left\|P_{t-s}\nabla^2\{f(\cdot+r\sigma z)-f(\cdot)\}(x)\right\|_{\op} |\sigma z|^2\\
    &\quad\le C(t-s)^{-1/\alpha}
    \left\|\nabla\{f(\cdot+r\sigma z)-f(\cdot)\}\right\|_\infty|\sigma z|^2\\
    &\quad\le C(t-s)^{-1/\alpha}
    \left\{[2\|\nabla f\|_\infty]\wedge [\|\nabla^2 f\|_{\op,\infty}r|\sigma z|]\right\} |\sigma z|^2\\
    &\quad\le C(t-s)^{-1/\alpha}
    \left\{\|\nabla f\|_\infty\wedge [\|\nabla^2 f\|_{\op,\infty}|z|]\right\}|z|^2.
\end{align*}
This yields for $\alpha \in (1,2)$,
\begin{align} \label{fgddd34s}
\begin{split}
    &|P_{t-s}\mathsf{J}_{12}(x)|\\
    &\leq\int_{|z| <1}\int_0^1\left|\langle P_{t-s}\nabla^2f(x+r\sigma z)-P_{t-s}\nabla^2f(x),
    (\sigma z)(\sigma z)^{\top}\rangle_{\HS}\right|(1-r)\,\dup r\, \frac{A(d,\alpha)}{|z|^{d+\alpha}}\,\dup z\\
    &\leq C(t-s)^{-1/\alpha} \int_{|z|<1}\left(\int_0^1(1-r)\,\dif r\right)
    \big\{\|\nabla f\|_\infty\wedge [\|\nabla^2 f\|_{\op,\infty}|z|]\big\} |z|^{2}\frac{A(d,\alpha)}{|z|^{d+\alpha}}\,\dup z\\
    &= \frac{C}{2}(t-s)^{-1/\alpha}\int_{|z|<1}\big\{\|\nabla f\|_\infty\wedge [\|\nabla^2 f\|_{\op,\infty}|z|]\big\}\frac{A(d,\alpha)}{|z|^{d+\alpha-2}}\,\dup z.
\end{split}
\end{align}
Now we are ready to prove Lemma \ref{genediff}.

\medskip\noindent
\emph{Proof of \ref{genediff}.\ref{genediff-a}:}
Take in the above calculations $k=0$. Note that $f=Q_{s}h$ and $s\in(0,1)$. By \eqref{grad11}, we get
$\|\nabla f\|_\infty\leq Cs^{-1/2}$. Then by \eqref{jh43dds} and \eqref{kj654fg}, we have for all $\alpha\in (1,2)$,
\begin{gather*}
    |\mathsf{J}_{2}(x)|\leq \frac{C}{\alpha-1}(2-\alpha)s^{-1/2}d,
\end{gather*}
and
\begin{gather*}
    |P_{t-s}\mathsf{J}_{11}(x)|
    \leq C(2-\alpha)(t-s)^{-1/\alpha}s^{-1/2} d\log(1+d),
\end{gather*}
Moreover, by \eqref{grad22}, $\|\nabla^2 f\|_{\op,\infty}\leq Cs^{-1}$. It follows from the first inequality in \eqref{crate} that for all $\alpha\in(0,2)$,
\begin{gather*}
   \frac{A(d,\alpha)\omega_{d-1}}{2-\alpha}\leq C d.
\end{gather*}
Therefore, we obtain from \eqref{fgddd34s} that for $\alpha \in (1,2)$,
\begin{align*}
    |P_{t-s}\mathsf{J}_{12}(x)|
    &\leq C(t-s)^{-1/\alpha}\int_{|z|<1}\left\{s^{-1/2}\wedge [s^{-1}|z|]\right\}
    \frac{A(d,\alpha)}{|z|^{d+\alpha-2}}\,\dup z\\
    &= C(t-s)^{-1/\alpha}
    \left(\int_{|z|\leq \sqrt{s}}s^{-1}|z|\,\frac{A(d,\alpha)}{|z|^{d+\alpha-2}}\,\dup z
    +\int_{\sqrt{s}<|z|< 1}s^{-1/2}\,\frac{A(d,\alpha)}{|z|^{d+\alpha-2}}\,\dup z\right)\\
    &= C(t-s)^{-1/\alpha}\,s^{-1/2}A(d,\alpha)\omega_{d-1}
    \left(s^{-1/2}\int_0^{\sqrt{s}}r^{2-\alpha}\,\dup r +\int_{\sqrt{s}}^1r^{1-\alpha}\,\dup r \right)\\
    &= C(t-s)^{-1/\alpha}\,s^{-1/2}\left(1-\frac{1}{3-\alpha}\,s^{1-\frac{\alpha}{2}}\right) \frac{A(d,\alpha)\omega_{d-1}}{2-\alpha}\\
    &\leq C(t-s)^{-1/\alpha}\,s^{-1/2}\left(1-\frac{1}{3-\alpha}\,s^{1-\frac{\alpha}{2}}\right)d.
\end{align*}
Combining all estimates, we get
for all $\alpha\in (1,2)$, $x\in\rd$ and $0<s<t\leq1$,
\begin{align*}
    &\left|P_{t-s}(\Ascr^P-\Ascr^Q)f(x)\right|\\
    &\quad\leq |P_{t-s}\mathsf{J}_{11}(x)| + |P_{t-s}\mathsf{J}_{12}(x)|+|P_{t-s}\mathsf{J}_{2}(x)|\\
    &\quad\leq C(t-s)^{-1/\alpha}\,s^{-1/2}
    \left[(2-\alpha)
    +\left(
    1-\frac{1}{3-\alpha}\,s^{1-\frac{\alpha}{2}}
    \right)
    \right]
    d\log(1+d)
    +\frac{C}{\alpha-1}(2-\alpha)s^{-1/2}d,
\end{align*}
from which we immediately get the first part of the lemma.

\medskip\noindent
\emph{Proof of \ref{genediff}.\ref{genediff-b}:}
 Let $k\geq1$. Since $f=Q_{s+k}h$ and $s+k>1$, it follows from Lemma \ref{TV-gradient} that
\begin{gather*}
    \|\nabla f\|_\infty\leq C_1\eup^{-C_2k}
    \quad\text{and}\quad
    \|\nabla^2 f\|_{\op,\infty}\leq C_1\eup^{-C_2k}.
\end{gather*}
Using \eqref{jh43dds} and \eqref{kj654fg} implies for all $\alpha\in(1,2)$,
\begin{gather*}
    |\mathsf{J}_{2}(x)|\leq \frac{C_1}{\alpha-1}(2-\alpha)\eup^{-C_2k}d,
\intertext{and}
    |P_{t-s}\mathsf{J}_{11}(x)|
    \leq C_1(2-\alpha)(t-s)^{-1/\alpha}\eup^{-C_2k} d\log(1+d).
\end{gather*}
With the first estimate in \eqref{crate}, it is easy to check that for any $\alpha \in (0,2)$ the following estimate holds:
\begin{gather*}
    \frac{A(d,\alpha)\omega_{d-1}}{3-\alpha}\leq C(2-\alpha)d.
\end{gather*}
Using \eqref{fgddd34s}, we get for all $\alpha\in(1,2)$,
\begin{align*}
    |P_{t-s}\mathsf{J}_{12}(x)|
    &\leq C_1(t-s)^{-1/\alpha}\int_{|z|\leq1}\eup^{-C_2k}|z|
    \frac{A(d,\alpha)}{|z|^{d+\alpha-2}}\,\dup z\\
    &= C_1(t-s)^{-1/\alpha}\eup^{-C_2k}
    A(d,\alpha)\omega_{d-1}\int_0^1r^{2-\alpha}\,\dif r\\
    &=C_1(t-s)^{-1/\alpha}\eup^{-C_2k}
    \frac{A(d,\alpha)\omega_{d-1}}{3-\alpha}\\
    &\leq C_1(2-\alpha)(t-s)^{-1/\alpha}
    \eup^{-C_2k}d.
\end{align*}
Finally, combining all estimates, we conclude that for all $\alpha\in (1,2)$,
\begin{align*}
    \left|P_{t-s}(\Ascr^P-\Ascr^Q)f(x)\right|
    &\leq |P_{t-s}\mathsf{J}_{11}(x)| + |P_{t-s}\mathsf{J}_{12}(x)|+|P_{t-s}\mathsf{J}_{2}(x)|\\
    &\leq C_1(2-\alpha)(t-s)^{-1/\alpha}
    \eup^{-C_2k}d\log(1+d)+\frac{C_1}{\alpha-1}(2-\alpha)\eup^{-C_2k}d,
\end{align*}
which completes the proof of the second part of the lemma.
\end{proof}

\section{Proofs of Theorem \ref{main1} of Corollary \ref{corollary}}
%\subsection{Proof of Theorem \ref{main1}}
\begin{lemma} \label{beta}
    For any $t\in(0,1]$ and $\alpha\in (1,2)$,
    \begin{align*}
        \int_{0}^{t} (t-s)^{-\frac{1}{\alpha}}\,s^{-\frac12}
        \left(3-\alpha -\frac{1}{3-\alpha}\,s^{1-\frac{\alpha}{2}} \right)\dif s
        \le  \frac{C}{\alpha-1} (2-\alpha) t^{-\frac{1}{\alpha}+\frac{1}{2}}\left(1+\log \tfrac{1}{t}\right).
    \end{align*}
\end{lemma}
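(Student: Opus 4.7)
The plan is to extract a hidden factor $(2-\alpha)$ from the algebraic expression $g(s) := (3-\alpha) - \frac{1}{3-\alpha}\,s^{1-\alpha/2}$ sitting inside the integrand, and then reduce to a Beta-type integral via the scaling $s = t\tau$ in order to isolate the $t$-dependence.

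\emph{Pointwise bound on $g$.} First I would rewrite
\[
g(s) = \frac{(3-\alpha)^{2} - s^{1-\alpha/2}}{3-\alpha}
\]
and split the numerator using the identity $(3-\alpha)^{2}-1 = (2-\alpha)(4-\alpha)$ as
$(3-\alpha)^{2} - s^{1-\alpha/2} = (2-\alpha)(4-\alpha) + \bigl(1 - s^{1-\alpha/2}\bigr).$
For $s\in(0,1]$ and $\alpha\in(1,2)$ the quantity $(1-\tfrac{\alpha}{2})\log\tfrac{1}{s}$ is nonnegative, so the elementary inequality $1-\eup^{-x}\le x$ ($x\ge 0$) yields $1 - s^{1-\alpha/2} \le \tfrac{2-\alpha}{2}\log\tfrac{1}{s}$. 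Using additionally $3-\alpha\ge 1$ and $4-\alpha\le 3$ gives
\[
0 \le g(s) \le C(2-\alpha)\Bigl(1 + \log\tfrac{1}{s}\Bigr),\qquad s\in(0,1].
\]

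\emph{Scaling and uniform integral bounds.} The substitution $s=t\tau$ then reduces the problem to
\[
\int_{0}^{t}(t-s)^{-1/\alpha}s^{-1/2}g(s)\,\dup s \le C(2-\alpha)\,t^{\frac12-\frac1\alpha}\int_{0}^{1}(1-\tau)^{-1/\alpha}\tau^{-1/2}\Bigl(1 + \log\tfrac{1}{t} + \log\tfrac{1}{\tau}\Bigr)\dup\tau,
\]
so it only remains to show that both $\int_{0}^{1}(1-\tau)^{-1/\alpha}\tau^{-1/2}\,\dup\tau$ and $\int_{0}^{1}(1-\tau)^{-1/\alpha}\tau^{-1/2}\log\tfrac{1}{\tau}\,\dup\tau$ are bounded by $C/(\alpha-1)$ uniformly in $\alpha\in(1,2)$. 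I would split these integrals at $\tau=1/2$: on $(0,1/2)$ one has $(1-\tau)^{-1/\alpha}\le 2$ while $\tau^{-1/2}(1+\log\tfrac{1}{\tau})$ is integrable with an $\alpha$-independent constant; on $(1/2,1)$ the factor $\tau^{-1/2}(1+\log\tfrac{1}{\tau})$ is bounded, and $\int_{1/2}^{1}(1-\tau)^{-1/\alpha}\,\dup\tau = \frac{(1/2)^{1-1/\alpha}}{1-1/\alpha} \le \frac{2}{\alpha-1}$, producing exactly the required $1/(\alpha-1)$ blow-up.

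The main obstacle is the first step: a naive estimate such as $g(s)\le 3-\alpha$ would yield only $\frac{C}{\alpha-1}t^{\frac12-\frac1\alpha}(1+\log\tfrac{1}{t})$, missing the crucial $(2-\alpha)$ factor. The algebraic rearrangement via $(3-\alpha)^{2}-1 = (2-\alpha)(4-\alpha)$ combined with the exponential inequality $1-\eup^{-x}\le x$ supplies precisely the cancellation that converts the bounded quantity $g$ into one of order $(2-\alpha)(1+\log\tfrac{1}{s})$, which, once integrated against the singular kernel $(t-s)^{-1/\alpha}s^{-1/2}$ (responsible for the $1/(\alpha-1)$), matches the target rate in the lemma.
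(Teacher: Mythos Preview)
Your proof is correct and follows essentially the same route as the paper: the identical algebraic decomposition $(3-\alpha)^2-1=(2-\alpha)(4-\alpha)$ together with the inequality $1-s^{\kappa}\le\kappa\log\tfrac{1}{s}$ to extract the factor $(2-\alpha)$, followed by the same scaling $s=t\tau$. The only difference is cosmetic: for the resulting Beta-type integrals the paper invokes the bound $\Gamma(x)<1/x$ on $\operatorname{B}(1-\tfrac{1}{\alpha},\cdot)$, whereas you split at $\tau=\tfrac12$; both yield the same $1/(\alpha-1)$ factor.
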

\begin{proof}
    Using the elementary estimate
    \begin{gather*}
        1-s^\kappa=-\log s\int_0^\kappa s^x\,\dif x
        \leq(-\log s)\kappa=\kappa\log\tfrac1s,
        \qquad 0<s<1,\; \kappa>0,
    \end{gather*}
    yields for all $s\in (0,1)$ and $\alpha\in(0,2)$,
    \begin{align*}
        3-\alpha-\frac{s^{1-\frac{\alpha}{2}}}{3-\alpha}
        &=\frac{(4-\alpha)(2-\alpha)}{3-\alpha}+\frac{1}{3-\alpha}\left(1-s^{1-\frac{\alpha}{2}} \right)\\
        &<2(2-\alpha)+\left(1-\tfrac\alpha2\right)\log\tfrac{1}{s}\\
        &<2(2-\alpha)\left(1+\log\tfrac{1}{s}\right).
    \end{align*}
    This, together with the change of variables $s=t\tau$, implies that for any $t\in (0,1]$ and $\alpha\in(1,2)$,
    \begin{align} \label{eqqchange}
    \begin{split}
        \int_{0}^{t} &(t-s)^{-\frac{1}{\alpha}}\,s^{-\frac12}
        \left(3-\alpha -\frac{1}{3-\alpha}\,s^{1-\frac{\alpha}{2}} \right)\dif s\\
        &\leq 2(2-\alpha)\int_{0}^{t} (t-s)^{-\frac{1}{\alpha}}\,s^{-\frac12}
        \left(1+\log\tfrac{1}{s}\right)\dif s\\
        &=2(2-\alpha)t^{-\frac{1}{\alpha}+\frac{1}{2}} \int_{0}^{1} (1-\tau)^{-\frac{1}{\alpha}} \tau^{-\frac{1}{2}} \left(
        1+\log\tfrac{1}{t\tau}\right)\dif \tau\\
        &=2(2-\alpha)t^{-\frac{1}{\alpha}+\frac{1}{2}}\left(
        \int_{0}^{1} (1-\tau)^{-\frac{1}{\alpha}} \tau^{-\frac{1}{2}}
        \left(
        1+\log\tfrac{1}{\tau}\right)\dif \tau+\operatorname{B}\left(1-\tfrac{1}{\alpha},\tfrac{1}{2}\right)\log\tfrac1t
        \right).
    \end{split}
    \end{align}
    As usual, $\operatorname{B}(\cdot,\cdot)$ is Euler's Beta function.
    From $\Gamma(x)<1/x$ for $x\in(0,1)$, we see that for all $\alpha\in(1,2)$,
    \begin{equation}\label{gammax}
        \operatorname{B}\left(1-\tfrac{1}{\alpha},\tfrac{1}{2}\right)
        =\frac{\Gamma\left(1-\frac{1}{\alpha}\right)\Gamma\left(\frac{1}{2}\right)}
        {\Gamma\left(\frac32-\frac{1}{\alpha}\right)}
        < \sqrt{\pi}\,\Gamma\left(1-\tfrac{1}{\alpha}\right)
        < \sqrt{\pi}\,\frac{\alpha}{\alpha-1}
        < \frac{2\sqrt{\pi}}{\alpha-1}.
    \end{equation}
    Noting that there exists some constant $C>0$ such that
    $$
        1+\log\tfrac{1}{\tau}\leq C\tau^{-1/4} \quad\text{for all $\tau\in(0,1)$},
    $$
    as in \eqref{gammax} we have for all $\alpha\in(1,2)$,
    \begin{align*}
        \int_{0}^{1} (1-\tau)^{-\frac{1}{\alpha}} &\tau^{-\frac{1}{2}}
        \left(
        1+\log\tfrac{1}{\tau}\right)\dif \tau
        \leq C\int_{0}^{1} (1-\tau)^{-\frac{1}{\alpha}} \tau^{-\frac{3}{4}}\,\dif \tau\\
        &=C\operatorname{B}\left(1-\tfrac{1}{\alpha},\tfrac{1}{4}\right)
        =C\frac{\Gamma\left(1-\frac{1}{\alpha}\right)\Gamma\left(\frac{1}{4}\right)}
        {\Gamma\left(\frac54-\frac{1}{\alpha}\right)}\\
        &<C\frac{\Gamma\left(\frac{1}{4}\right)}{\Gamma\left(\frac{3}{4}\right)}\,\Gamma\left(1-\tfrac{1}{\alpha}\right)
        <\frac{\Gamma\left(\frac{1}{4}\right)}{\Gamma\left(\frac{3}{4}\right)}\,\frac{2C}{\alpha-1}.
    \end{align*}
    Substituting this and \eqref{gammax} into \eqref{eqqchange}, finishes the proof.
\end{proof}

\begin{proposition}\label{geneint}
    Assume \textup{\textbf{(H1)}} and \textup{\textbf{(H2)}}. For all $\alpha \in (1,2)$ and $t>0$,
    \begin{gather*}
        \sup_{x\in\real^d}\|\law(X_t^x)-\law(Y_t^x)\|_\TV
        \leq \frac{Cd^{3/2}}{\alpha-1}(2-\alpha)\cdot
        \left\{\left(t^{-\frac{1}{\alpha}+\frac{1}{2}}\log\tfrac{1}{t}\right) \vee 1  \right\}.
    \end{gather*}
\end{proposition}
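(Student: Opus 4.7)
The plan is to pass from total variation to operator norms via the duality $\|\law(X_t^x)-\law(Y_t^x)\|_{\TV}=\sup_{h\in\Cscr_b(1)}|P_t h(x)-Q_t h(x)|$, and then to compare $P_t$ and $Q_t$ through the interpolation identity
\begin{equation*}
  P_t h(x) - Q_t h(x) = \int_0^t P_{t-s}\bigl(\Ascr^P-\Ascr^Q\bigr) Q_s h(x)\,\dup s,
\end{equation*}
obtained by differentiating $s\mapsto P_{t-s}Q_s h(x)$; the gradient estimates of Lemma \ref{TV-gradient} give enough smoothness of $Q_s h$ for this identity to hold pointwise. After that I would split the analysis into the short-time window $t\in(0,1]$ and the long-time window $t>1$.

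For $t\in(0,1]$ I would plug Lemma \ref{genediff}\ref{genediff-a} into the integrand. The first summand then contributes, via Lemma \ref{beta}, a term bounded by $\tfrac{Cd^{3/2}}{\alpha-1}(2-\alpha)\,t^{-1/\alpha+1/2}\bigl(1+\log\tfrac1t\bigr)$; the second (lower-order) summand integrates to at most $\tfrac{Cd(2-\alpha)}{\alpha-1}\sqrt t$. A short case split on $t$ (the regime $t\leq 1/e$, where $1+\log\tfrac1t\leq 2\log\tfrac1t$, and the regime $t\in(1/e,1]$, where $t^{-1/\alpha+1/2}(1+\log\tfrac1t)$ is bounded by an absolute constant) together with $d\leq d^{3/2}$ swallows both contributions into $\tfrac{Cd^{3/2}(2-\alpha)}{\alpha-1}\bigl\{(t^{-1/\alpha+1/2}\log\tfrac1t)\vee 1\bigr\}$.

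For $t>1$ the integration interval $[0,t]$ stretches beyond where Lemma \ref{genediff}\ref{genediff-a} applies, so I would telescope. Writing $t=N+\tau$ with $N\in\nat$ and $\tau\in(0,1]$, iterating $P_n h-Q_n h=P_1(P_{n-1}h-Q_{n-1}h)+(P_1-Q_1)Q_{n-1}h$ and using $P_t=P_\tau P_N$, $Q_t=Q_\tau Q_N$ gives
\begin{equation*}
  P_t h-Q_t h = \sum_{j=0}^{N-1}P_{t-1-j}\bigl[(P_1-Q_1)Q_j h\bigr] + (P_\tau-Q_\tau)Q_N h.
\end{equation*}
Each summand is a unit-time difference, to which I would apply the interpolation identity on $[0,1]$ (respectively on $[0,\tau]$) once more. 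For $j=0$, Lemma \ref{genediff}\ref{genediff-a} and Lemma \ref{beta} evaluated at $t=1$ give a constant bound $\tfrac{Cd^{3/2}(2-\alpha)}{\alpha-1}$; for $j\geq 1$ and for the final term (with $k=N$), Lemma \ref{genediff}\ref{genediff-b} supplies an exponential decay factor $\eup^{-C_2 j}$ or $\eup^{-C_2 N}$ after integrating the bounded pieces $\int_0^1(1-s)^{-1/\alpha}\dup s=\alpha/(\alpha-1)$ and $\int_0^\tau (\tau-s)^{-1/\alpha}\dup s\leq \alpha/(\alpha-1)$. Summing the geometric series and using the $L^\infty$-contractivity of $P_{t'}$ produces the $t$-independent bound $\tfrac{C'd^{3/2}(2-\alpha)}{\alpha-1}$, which matches the claim since $(t^{-1/\alpha+1/2}\log\tfrac1t)\vee 1=1$ for $t\geq 1$. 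Combining with the trivial $\|\cdot\|_{\TV}\leq 2$ finishes the argument.

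The most delicate point is the short-time case: Lemma \ref{beta} delivers the factor $(1+\log\tfrac1t)$, whose $+1$ is spurious near $t=1$ where $\log\tfrac1t$ vanishes, so the $\vee 1$ in the claimed bound is essential and the case split described above is the natural way to realise it. In the long-time case the only verification required is that the $j=0$ summand—which lacks the exponential decay coming from Lemma \ref{TV-gradient}—nevertheless fits under the same constant, which it does because Lemma \ref{beta} evaluated at $t=1$ already produces a $t$-independent constant of the correct order in $d$ and $2-\alpha$.
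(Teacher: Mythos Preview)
Your proposal is correct and follows essentially the same route as the paper. The paper also uses the interpolation identity and, for $t\in(0,1]$, combines Lemma~\ref{genediff}\ref{genediff-a} with Lemma~\ref{beta}; for $t>1$ it splits the integral $\int_0^t$ into the pieces $\int_k^{k+1}$ ($0\le k\le\lfloor t\rfloor-1$) and $\int_{\lfloor t\rfloor}^t$, then shifts each piece to start at $0$ and pulls out the factor $P_{t-k-1}$, which is exactly your semigroup telescoping written at the level of the integrand. The only cosmetic differences are that the paper leaves the passage from $t^{-1/\alpha+1/2}(1+\log\tfrac1t)$ to $(t^{-1/\alpha+1/2}\log\tfrac1t)\vee 1$ implicit, whereas you spell out the $t\le 1/e$ versus $t\in(1/e,1]$ split, and that the paper uses $\lfloor t\rfloor$ rather than your $N=\lceil t\rceil-1$ decomposition.
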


\begin{proof}
Recall that $\Cscr_b(1)$ is the unit ball in $C_b(\rd,\real)$. Since
\begin{gather*}
    \|\law(X_t^x)-\law(Y_t^x)\|_\TV=\sup_{h\in \Cscr_{b}(1)}|P_t h(x)-Q_t h(x)|
\intertext{and}
    P_t h-Q_t h
    = -\int_0^t \frac{\dif}{\dif s}\,P_{t-s} Q_s h \,\dif s
    =\int_0^t P_{t-s} (\Ascr^P-\Ascr^Q)Q_s h\,\dif s,
\intertext{we get}
    \sup_{x\in\real^d}\|\law(X_t^x)-\law(Y_t^x)\|_\TV=\sup_{h\in \Cscr_{b}(1)}\sup_{x\in\real^d}\left|
    \int_0^t P_{t-s} (\Ascr^P-\Ascr^Q)Q_s h(x)\,\dif s
    \right|.
\end{gather*}
From Lemmas \ref{genediff} and \ref{beta} we see that for all $\alpha\in(1,2)$ and $t\in(0,1]$,
\begin{align} \label{ffs:a11}
\begin{split}
    \sup_{h\in \Cscr_{b}(1)}&\int_0^t \left\|P_{t-s} (\Ascr^P-\Ascr^Q)Q_s h\right\|_\infty\dif s\\
    &\leq Cd\log(1+d)\cdot\int_{0}^{t} (t-s)^{-\frac{1}{\alpha}}\,s^{-\frac12}
        \left(3-\alpha -\frac{1}{3-\alpha}\,s^{1-\frac{\alpha}{2}} \right)\dif s\\
        &\quad\mbox{}+\frac{Cd}{\alpha-1}(2-\alpha)\int_0^ts^{-1/2}\,\dif s\\
        &\leq\frac{Cd\log(1+d)}{\alpha-1}(2-\alpha)t^{-\frac{1}{\alpha}+\frac{1}{2}}\left(1+\log \tfrac{1}{t}\right)
        +\frac{2Cd}{\alpha-1}(2-\alpha)t^{1/2}\\
        &\leq\frac{C_1d\log(1+d)}{\alpha-1}(2-\alpha)t^{-\frac{1}{\alpha}+\frac{1}{2}}\left(1+\log \tfrac{1}{t}\right).
\end{split}
\end{align}
Combining this with
\begin{gather*}
    \sup_{x\in\real^d}\|\law(X_t^x)-\law(Y_t^x)\|_\TV
    \leq\sup_{h\in \Cscr_{b}(1)}\int_0^t \left\|P_{t-s} (\Ascr^P-\Ascr^Q)Q_s h\right\|_\infty\dif s,
\end{gather*}
proves assertion for $t\in(0,1]$.

Next, we consider the case $t>1$. Denote by $\lfloor t\rfloor$ the largest integer which is less than or equal to $t>1$. From
\begin{align*}
    &\int_0^t P_{t-s} (\Ascr^P-\Ascr^Q)Q_s h\,\dif s\\
    &=\sum_{k=0}^{\lfloor t\rfloor-1}\int_{k}^{k+1}P_{t-k-1}P_{k+1-s}
    (\Ascr^P-\Ascr^Q)Q_s h\,\dif s
    +\int_{\lfloor t\rfloor}^tP_{t-s}
    (\Ascr^P-\Ascr^Q)Q_s h\,\dif s\\
    &=\sum_{k=0}^{\lfloor t\rfloor-1}\int_{0}^{1}P_{t-k-1}P_{1-s}
    (\Ascr^P-\Ascr^Q)Q_{s+k} h\,\dif s
    +\int_{0}^{t-\lfloor t\rfloor}P_{t-\lfloor t\rfloor-s}
    (\Ascr^P-\Ascr^Q)Q_{\lfloor t\rfloor +s} h\,\dif s,
\end{align*}
we obtain
\begin{align} \label{ffs3w2d}
\begin{split}
    \sup_{x\in\real^d}&\|\law(X_t^x)-\law(Y_t^x)\|_\TV\\
    &\leq\sup_{h\in\Cscr_b(1)}\int_{0}^{1}
    \left\|P_{1-s}(\Ascr^P-\Ascr^Q)Q_{s} h\right\|_\infty\dif s\\
    &\quad\mbox{}+
    \sum_{k=1}^{\lfloor t\rfloor-1}\sup_{h\in\Cscr_b(1)}\int_{0}^{1}
    \left\|P_{1-s}(\Ascr^P-\Ascr^Q)Q_{s+k} h\right\|_\infty\dif s\\
    &\quad\mbox{}+\sup_{h\in\Cscr_b(1)}\int_{0}^{t-\lfloor t\rfloor}
    \left\|P_{t-\lfloor t\rfloor-s}(\Ascr^P-\Ascr^Q)Q_{\lfloor t\rfloor +s} h\right\|_\infty\dif s,
\end{split}
\end{align}
where we use the `empty-sum-convention' $\sum_{k=1}^0\dots=0$. Using \eqref{ffs:a11} with $t=1$, we see that for all $\alpha\in(1,2)$,
\begin{gather*}
    \sup_{h\in\Cscr_b(1)}\int_0^1 \left\|P_{1-s} (\Ascr^P-\Ascr^Q)Q_s h\right\|_{\infty}\dif s
    \leq \frac{Cd\log(1+d)}{\alpha-1}(2-\alpha).
\end{gather*}
By Lemma \ref{genediff}.\ref{genediff-b}, we find for all $\alpha \in (1,2)$,
\begin{align*}
    \sup_{h\in\Cscr_b(1)}&\int_{0}^{1} \left\|P_{1-s}(\Ascr^P-\Ascr^Q)Q_{s+k} h\right\|_\infty\dif s\\
    &\leq C_1(2-\alpha)\eup^{-C_2k}\int_0^1\left( (1-s)^{-1/\alpha}+\frac{1}{\alpha-1}\right)\dif s\cdot d\log(1+d)\\
    &=C_1(2-\alpha)\eup^{-C_2k}\left(\frac{\alpha}{\alpha-1}+\frac{1}{\alpha-1}\right)d\log(1+d)\\
    &\leq\frac{3C_1d\log(1+d)}{\alpha-1}(2-\alpha)\eup^{-C_2k},
\end{align*}
and
\begin{align*}
    \sup_{h\in\Cscr_b(1)}&\int_{0}^{t-\lfloor t\rfloor}
    \left\|P_{t-\lfloor t\rfloor-s}(\Ascr^P-\Ascr^Q)Q_{s+\lfloor t\rfloor } h\right\|_\infty\dif s\\
    &\leq C_1(2-\alpha)\eup^{-C_2\lfloor t\rfloor}
    \int_{0}^{t-\lfloor t\rfloor}\left((t-\lfloor t\rfloor-s)^{-1/\alpha}+\frac{1}{\alpha-1}\right)\dif s\cdot d\log(1+d)\\
    &=C_1(2-\alpha)\eup^{-C_2\lfloor t\rfloor}
    \left((t-\lfloor t\rfloor)^{1-1/\alpha}\frac{\alpha}{\alpha-1}+\frac{1}{\alpha-1}(t-\lfloor t\rfloor)\right)d\log(1+d)\\
    &\leq C_1(2-\alpha)\left(\frac{\alpha}{\alpha-1}+\frac{1}{\alpha-1}\right)d\log(1+d)\\
    &\leq\frac{3C_1d\log(1+d)}{\alpha-1}(2-\alpha).
\end{align*}
Substituting these estimates into \eqref{ffs3w2d}, shows that for all $t\geq1$ and $\alpha\in(1,2)$,
\begin{align*}
    \sup_{x\in\real^d}\|\law(X_t^x)-\law(Y_t^x)\|_\TV
    &\leq \frac{C_1d\log(1+d)}{\alpha-1}(2-\alpha)\left(
    1+\sum_{k=1}^{\lfloor t\rfloor-1}\eup^{-C_2k}
    \right)\\
    &\leq \frac{Cd\log(1+d)}{\alpha-1}(2-\alpha).
\end{align*}
This completes the proof.
\end{proof}

\begin{proposition}\label{dd3hj3s}
    Assume \textup{\textbf{(H1)}} and \textup{\textbf{(H2)}}. For all $x,y\in \rd$ and $t>0$,
    \begin{gather*}
        \left\|\law(Y^{x}_{t})-\law(Y^{y}_{t})\right\|_{\TV}
        \leq C_{1} \left\{t^{-1/2} \vee 1\right\}\eup^{-C_{2}t}|x-y|.
    \end{gather*}
\end{proposition}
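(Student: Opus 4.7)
The plan is to reduce the total variation distance to a gradient estimate on the semigroup $Q_t$ and then invoke Lemma~\ref{TV-gradient}. Specifically, by the dual characterization of total variation used earlier in Proposition~\ref{geneint}, we have
\begin{gather*}
    \left\|\law(Y^{x}_{t})-\law(Y^{y}_{t})\right\|_{\TV}
    = \sup_{h\in\Cscr_b(1)} \left|Q_t h(x) - Q_t h(y)\right|.
\end{gather*}
For each $h\in\Cscr_b(1)$ and $t>0$, Lemma~\ref{TV-gradient} (in particular \eqref{grad11}) tells us that $Q_t h$ is Lipschitz with Lipschitz constant bounded by $C_1\{t^{-1/2}\vee 1\}\eup^{-C_2 t}$, uniformly in the dimension $d$.

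Thus, by the mean value theorem applied along the segment from $x$ to $y$,
\begin{gather*}
    \left|Q_t h(x) - Q_t h(y)\right|
    \leq \|\nabla Q_t h\|_{\infty}\,|x-y|
    \leq C_1 \left\{t^{-1/2}\vee 1\right\} \eup^{-C_2 t}\,|x-y|.
\end{gather*}
Taking the supremum over $h\in\Cscr_b(1)$ yields one half of the desired bound. Combining with the trivial bound $\|\mu-\nu\|_{\TV}\leq 2$ for any two probability measures $\mu,\nu$ gives the ``$2\wedge$'' form claimed.

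There is no genuine obstacle here: the work has already been done in establishing the gradient estimate \eqref{grad11}, which itself packages both the short-time smoothing (Bismut--Elworthy--Li type estimate from \cite{P-W}) and the long-time exponential contraction from Eberle's coupling (\cite[Corollary 2]{E16}). The proof of this proposition is essentially a one-line consequence of Lemma~\ref{TV-gradient} together with the duality formula for total variation.
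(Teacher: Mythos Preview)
Your proof is correct and essentially identical to the paper's: both use the duality $\|\law(Y^x_t)-\law(Y^y_t)\|_\TV=\sup_{h\in\Cscr_b(1)}|Q_th(x)-Q_th(y)|$, bound the difference by $\|\nabla Q_th\|_\infty|x-y|$, and then invoke \eqref{grad11}. The paper's version is just slightly terser.
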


\begin{proof}
    By the definition of the total variation norm $\|\cdot\|_\TV$, one has
    \begin{gather*}
        \left\|\law(Y^{x}_{t})-\law(Y^{y}_{t})\right\|_{\TV}
        =\sup_{h\in \Cscr_b(1)}\left|Q_th(x)-Q_th(y)\right|
        \leq\sup_{h\in \Cscr_b(1)}\left\|\nabla Q_th\right\|_\infty|x-y|.
    \end{gather*}
    Together with \eqref{grad11} we get the claimed estimate.
\end{proof}

\bigskip
\begin{proof}[\bfseries Proof of Theorem \ref{main1}]
Note that
\begin{gather*}
     \left\|\law (X_{t}^x)-\law (Y_{t}^y)\right\|_\TV
     \leq\left\|\law (X_{t}^x)-\law (Y_{t}^x)\right\|_\TV+
     \left\|\law (Y_{t}^x)-\law (Y_{t}^y)\right\|_\TV.
\end{gather*}
Therefore, the first assertion follows straight from Propositions \ref{geneint} and \ref{dd3hj3s}.

From the classical ergodic theory for Markov processes (see, for instance, \cite{Wa13}), one has
\begin{gather*}
    \lim_{t\to\infty}\left\|\law (X_{t}^x)-\mu_\alpha\right\|_\TV
    =\lim_{t\to\infty}\left\|\law (Y_{t}^y)-\mu_2\right\|_\TV
    =0,\quad x,y\in\rd.
\intertext{Since for any fixed $x,y\in \rd$}
    \left\|\mu_\alpha-\mu_2\right\|_\TV
    \leq \left\|\mu_\alpha-\law(X_{t}^x)\right\|_\TV + \left\|\law (X_{t}^x)-\law (Y_{t}^y)\right\|_\TV
    + \left\|\law (Y_{t}^y)-\mu_2\right\|_\TV,
\end{gather*}
we can let $t\to\infty$ to get \eqref{was-est} from the first part.
\end{proof}

\begin{proof}[\bfseries Proof of Corollary \ref{corollary}]
 It follows from \cite[Lemma 2.1]{HRW23} with $\psi(r)=r^{p}$ that for any $p\in(0,1)$ and probability measures $\mu,\nu$,
   \begin{align*}
       \|\mu-\nu\|_{ W_p}&\leq\inf_{t>0}\left\{
        t^{p/2}\sqrt{d}\,\|\mu-\nu\|_\TV+t^{(p-1)/2}d\,\|\mu-\nu\|_{ W_1}
        \right\}\\
        &\leq d^{(p+1)/2}\left\{
        \|\mu-\nu\|_\TV+\|\mu-\nu\|_{ W_1}
        \right\}.
   \end{align*}
   If \textup{\textbf{(H1)}} and \textup{\textbf{(H2)}} hold, we know from \cite[Theorem 1.1]{DSX23} that
   for any $\alpha\in(1,2)$,
   \begin{gather*}
        \|\mu_\alpha-\mu_2\|_{W_1}\leq\frac{Cd\log(1+d)}{\alpha-1} (2-\alpha).
   \end{gather*}
   Combining these two estimates with \eqref{was-est}, we obtain the desired estimate.
\end{proof}

\section{Optimality for the Ornstein--Uhlenbeck process on $\real$} \label{s:OU}

In this section, we assume that $\mu_\alpha$ is the invariant measure of the Ornstein--Uhlenbeck process on $\real$, which solves the following SDE:
\begin{gather*}
    \dif X_{t}=-X_{t}\,\dif t+\dif L_{t}, \quad X_{0}=x\in\real.
\end{gather*}
Here, $L_{t}$ is a rotationally symmetric $\alpha$-stable L\'evy process on $\real$ with $\Ee\left[\eup^{\iup\xi L_t}\right]=\eup^{-t|\xi|^\alpha/2}$, and $\mu_2$ is the invariant measure of
\begin{gather*}
    \dif Y_{t}=-Y_{t}\,\dif t+\dif B_{t}, \quad Y_{0}=y\in\real,
\end{gather*}
where $B_{t}$ is a standard Brownian motion on $\real$ with $\Ee\left[\eup^{\iup\xi B_t}\right]=\eup^{-t|\xi|^2/2}$.

It is easy to see that \textbf{(H1)} holds with $\sigma=1$, while \textbf{(H2)}  holds with $\theta_0=1$, $K=0$, $\theta_1=1$ and $\theta_2=0$.

\begin{proposition}\label{ourate}
We have
\begin{gather*}
    0 < \liminf_{\alpha\uparrow2}\frac{\|\mu_\alpha-\mu_2\|_\TV}{2-\alpha}
    \leq \limsup_{\alpha\uparrow2}\frac{\|\mu_\alpha-\mu_2\|_\TV}{2-\alpha}
    < \infty.
\end{gather*}
\end{proposition}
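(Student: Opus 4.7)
My plan is to handle the upper and lower bounds separately; the upper bound is immediate and the lower bound reduces to an explicit characteristic-function computation.

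For the upper bound I would simply invoke \eqref{was-est} from Theorem \ref{main1} in dimension $d=1$: since \textup{\textbf{(H1)}} and \textup{\textbf{(H2)}} hold with the constants already noted just above the proposition, one obtains $\|\mu_\alpha - \mu_2\|_{\TV} \le C(2-\alpha)/(\alpha-1)$, and since $1/(\alpha-1) \to 1$ as $\alpha \uparrow 2$, this at once yields $\limsup_{\alpha\uparrow 2}(2-\alpha)^{-1}\|\mu_\alpha-\mu_2\|_{\TV} < \infty$.

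For the lower bound, the key is to identify the invariant measures explicitly via their characteristic functions. Variation of constants gives $X_t^x = \eup^{-t}x + \int_0^t \eup^{-(t-s)}\,\dif L_s$, and a standard characteristic-function computation combined with ergodicity yields
\begin{equation*}
    \hat\mu_\alpha(\xi) = \exp\left(-\tfrac{1}{2}\int_0^\infty |\eup^{-s}\xi|^\alpha\,\dif s\right) = \exp\left(-\tfrac{|\xi|^\alpha}{2\alpha}\right), \qquad \alpha \in (1,2].
\end{equation*}
I would then test against $h(x) = \cos x$, which belongs to $\Cscr_b(1)$. Because $\mu_\alpha$ and $\mu_2$ are both symmetric about $0$, $\hat\mu_\alpha(1)$ is real, so
\begin{equation*}
    \mu_\alpha(h) - \mu_2(h) = \hat\mu_\alpha(1) - \hat\mu_2(1) = \eup^{-1/(2\alpha)} - \eup^{-1/4}.
\end{equation*}

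The final step is a first-order Taylor expansion of $\alpha \mapsto \eup^{-1/(2\alpha)}$ at $\alpha = 2$, whose derivative there equals $\eup^{-1/4}/8$; this gives
\begin{equation*}
    \|\mu_\alpha - \mu_2\|_{\TV} \ge \left|\mu_\alpha(h) - \mu_2(h)\right| = \tfrac{\eup^{-1/4}}{8}(2-\alpha)\bigl(1+o(1)\bigr) \quad\text{as } \alpha\uparrow 2,
\end{equation*}
and hence $\liminf_{\alpha\uparrow 2}(2-\alpha)^{-1}\|\mu_\alpha - \mu_2\|_{\TV} \ge \eup^{-1/4}/8 > 0$. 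No serious obstacle is expected: once Theorem \ref{main1} and the explicit form of $\hat\mu_\alpha$ are in hand, both parts are essentially algebraic; the only item requiring a one-line justification is the identification of $\mu_\alpha$ as the law of $\int_0^\infty \eup^{-s}\,\dif L_s$, which follows from stationarity of increments of $L$ and uniqueness of the invariant measure.
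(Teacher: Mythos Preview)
Your proposal is correct and follows essentially the same route as the paper: the upper bound is taken directly from Theorem~\ref{main1}, and the lower bound is obtained by identifying $\hat\mu_\alpha(\xi)=\eup^{-|\xi|^\alpha/(2\alpha)}$ via the explicit solution of the Ornstein--Uhlenbeck SDE and then testing with $h(x)=\cos x$, yielding the same constant $\eup^{-1/4}/8$. The only cosmetic differences are that the paper phrases $\mu_\alpha$ as $\law(\alpha^{-1/\alpha}L_1)$ rather than $\law\bigl(\int_0^\infty \eup^{-s}\,\dif L_s\bigr)$ and computes the limit of the difference quotient directly instead of via Taylor expansion.
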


\begin{proof}
The upper bound follows from Theorem \ref{main1}. For the lower bound we use that
$X_{t}=\eup^{-t}x+\eup^{-t}\int_{0}^{t}\eup^{s}\,\dif L_{s}$. For $\xi\in\real$ we see
\begin{align*}
    \Ee\left[\eup^{\iup\xi X_{t}}\right]
    &= \eup^{\iup\xi\eup^{-t}x} \Ee\left[ \eup^{\iup\int_{0}^{t}\xi \eup^{-t}\eup^{s}\,\dif L_{s}}\right]
    = \eup^{\iup\xi\eup^{-t}x}\eup^{-2^{-1}\int_{0}^{t}|\xi \eup^{-t}\eup^{s}|^{\alpha}\,\dif s}\\
    &= \eup^{\iup\xi\eup^{-t}x} \eup^{-(2\alpha)^{-1}|\xi|^{\alpha}(1-\eup^{-\alpha t})}
    \;\xrightarrow[]{\; t\to\infty\; }\;
    \eup^{-|\xi|^{\alpha}/(2\alpha)}
    =\Ee\left[\eup^{\iup\xi\alpha^{-1/\alpha}L_{1}}\right].
\end{align*}
Thus, the ergodic measure $\mu_\alpha$ is given by the law of $\alpha^{-1/\alpha}L_{1}$.  Similarly, $\mu_2=\law(2^{-1/2}B_{1})$.

Let
\begin{gather*}
    h(x):=\cos x=\RE\eup^{\iup x},\quad x\in\real.
\end{gather*}
Since $|h(x)|\leq 1$, it follows that
\begin{align*}
    \|\mu_2-\mu_\alpha\|_\TV
    &\geq \mu_2(h)-\mu_\alpha(h)\\
    &= \RE\int_\real\eup^{\iup x}\,\Pp\left(2^{-1/2}B_{1}\in\dup x\right)
    - \RE\int_\real\eup^{\iup x}\,\Pp\left(\alpha^{-1/\alpha}L_{1}\in\dup x\right)\\
    &= \Ee\left[\eup^{\iup2^{-1/2}B_{1}}\right] - \Ee\left[\eup^{\iup\alpha^{-1/\alpha}L_{1}}\right]\\
    &=\eup^{-1/4}-\eup^{-1/(2\alpha)},
\end{align*}
which yields
\begin{gather*}
    \liminf_{\alpha\uparrow2}\frac{\|\mu_2-\mu_\alpha\|_\TV}{2-\alpha}
    \geq \lim_{\alpha\uparrow2}
    \frac{\eup^{-1/4} - \eup^{-1/(2\alpha)}}{2-\alpha}
    =\frac18\,\eup^{-1/4}.
\qedhere
\end{gather*}
\end{proof}

\section{Appendix}

The following exponential contraction result is due to Eberle~\cite[Corollary 2]{E16}, but in concrete situations it is very hard to verify the rather general conditions of that paper without examining the proofs. Therefore, and for the convenience of our readers, we decided to give a short proof of our own, based on Eberle's ideas, which is very easy to apply in the present situation.

\begin{proposition}\label{expcontra}
	Consider the SDE \eqref{BM-SDE} and assume \textup{\textbf{(H1)}}, \eqref{H1-1} and \eqref{H1-1'}. There exist constants $C_1,C_2>0$ such that for all $x,y\in{\mathds{R}^d}$ and $t\geq 0$,
	\begin{gather*}
		\left\|\law(Y^{x}_{t})-\law(Y^{y}_{t})\right\|_{W_1}
		\leq C_1\eup^{-C_{2}t}|x-y|.
	\end{gather*}
\end{proposition}

We begin with a few preparations. Set $R_0:=(2K/\theta_0)^{1/2}$. It follows from \eqref{H1-1} and \eqref{H1-1'} that for all $x,y\in{\mathds{R}^d}$,
\begin{equation}\label{onesidedlip}
	\langle x-y, b(x)-b(y)\rangle\leq\phi(|x-y|)|x-y|,
\end{equation}
where
\begin{gather*}
	\phi(r):=\theta_1r\I_{[0,R_0]}(r) - \frac{1}{2}\theta_0 r\I_{(R_0,\infty)}(r),\quad r\geq0.
\end{gather*}
Without loss of generality, we may assume that the matrix $\sigma\in\real^{d\times d}$ is symmetric and positive definite; otherwise, we could replace $\sigma$ by $(\sigma\sigma^\top)^{1/2}$. Denote by $\lambda_{\min}$ the smallest eigenvalue of $\sigma$. For $r\geq 0$, let
\begin{gather*}
	f(r)
	:=\int_0^r\exp\left[
	-\frac{\theta_0+2\theta_1}{8\lambda_{\min}}(s\wedge R_0)^2
	\right]\dif s.
\end{gather*}
It is easy to verify that $f$ is piecewise of class $C^2$, $f'>0$ is continuous and decreasing on $[0,\infty)$, and
\begin{equation}\label{linearbound}
	f'(R_0)r
	\leq f(r)
	\leq r,\quad r\geq 0.
\end{equation}

\begin{lemma}\label{dissibound}
	For all $r\in[0,\infty)\setminus\{R_0\}$,
	\begin{gather*}
		f'(r)\phi(r)+2\lambda_{\min}f''(r)
		\leq -\frac{\theta_0}{2}f'(R_0)f(r).
	\end{gather*}
\end{lemma}

\begin{proof}
	If $r\in[0,R_0)$, we get by the definition of $f$, the monotonicity of $f'$ and the second inequality in \eqref{linearbound},
	\begin{align*}
		f'(r)\phi(r)+2\lambda_{\min}f''(r)
		&=\theta_1rf'(r)+2\lambda_{\min}f''(r)\\
		&=-\frac{\theta_0}{2}rf'(r)
		\leq-\frac{\theta_0}{2}f'(R_0)r
		\leq-\frac{\theta_0}{2}f'(R_0)f(r).
	\end{align*}
	If $r>R_0$, then $f'(r)=f'(R_0)$ and $f''(r)=0$, and we get
	\begin{gather}
		f'(r)\phi(r)+2\lambda_{\min}f''(r)
		=f'(R_0)\cdot\left(-\frac{\theta_0}{2}r\right)
		\leq-\frac{\theta_0}{2}f'(R_0)f(r).
	\end{gather}
\end{proof}

\begin{proof}[Proof of Proposition \ref{expcontra}]
	Fix $x,y\in{\mathds{R}^d}$ and assume that $x\neq y$. Consider the following SDE with initial value $\hat{Y}_0=y$
	\begin{gather*}
		\left\{\begin{aligned}
			\dif \hat{Y}_t &=b(\hat{Y}_t)\,\dif t+\sigma\left(
			I_{d\times d}-2e_te_t^\top
			\right)\dif B_t,&&t<\tau,\\
			\hat{Y}_t &=Y_t^x,&&t\geq\tau,
		\end{aligned}\right.
	\end{gather*}
	where $I_{d\times d}\in\real^{d\times d}$ is the identity matrix,  $e_t:= \frac{\sigma^{-1}(Y_t^x-\hat{Y}_t)}{|\sigma^{-1}(Y_t^x-\hat{Y}_t)|}$ and $\tau:=\inf\left\{t\geq 0\,:\,\hat{Y}_t=Y_t^x\right\}$ is the coupling time. Since the process
	$(I_{d\times d}-2e_te_t^\top)_{t\geq 0}$ takes values in the orthogonal $d\times d$-matrices, we know from L\'{e}vy's criterion that
	\begin{gather*}
		\int_0^t\left(
		I_{d\times d}-2e_se_s^\top\I_{\{s<\tau\}}
		\right)\dif B_s,\quad t\geq0
	\end{gather*}
	is a standard Brownian motion in ${\mathds{R}^d}$. Therefore, $\law(\hat{Y}_t)=\law(Y_t^y)$ for all $t\geq 0$.
	Set $Z_t:=Y_t^x-\hat{Y}_t$ and note that $(W_t)_{t\geq 0}$, $W_t:=\int_0^t e_s^\top\,\dif B_s$, is a standard
	Brownian motion in $\real$. By the It\^o-Meyer formula, we get for all $t<\tau$
	\begin{gather*}
		\dif |Z_t|
		= \frac{1}{|Z_t|}\left\langle Z_t, b(Y_t^x)-b(\hat{Y}_t)\right\rangle \dif t
		+\frac{2|Z_t|}{|\sigma^{-1}Z_t|}\,\dif W_t
	\end{gather*}
	(note that the local-time part vanishes as $t<\tau$, i.e.\ $Z_t\neq 0$). Since $f$ is piecewise $C^2$ and $f'$ is continuous, we can use It\^o's formula and the expression for $\dif |Z_t|$ to get for all $t<\tau$,
	\begin{gather*}
		\dif f(|Z_t|)
		= \frac{f'(|Z_t|)}{|Z_t|}\left\langle Z_t, b(Y_t^x)-b(\hat{Y}_t)\right\rangle \dif t
		+ f''(|Z_t|)\frac{2|Z_t|^2}{|\sigma^{-1}Z_t|^2}\,\dif t
		+ f'(|Z_t|)\frac{2|Z_t|}{|\sigma^{-1}Z_t|}\,\dif W_t.
	\end{gather*}
	Since $f''\leq0$ on $[0,\infty)\setminus\{R_0\}$ and $|\sigma^{-1}x| \leq |x|/\lambda_{\min}$ for $x\in{\mathds{R}^d}$, it follows from \eqref{onesidedlip} and Lemma \ref{dissibound} that
	\begin{align*}
		\dif f(|Z_t|)
		&\leq \left[f'(|Z_t|)\phi(|Z_t|)+2\lambda_{\min}f''(|Z_t|)\right]\dif t
		+ f'(|Z_t|)\frac{2|Z_t|}{|\sigma^{-1}Z_t|}\,\dif W_t\\
		&\leq-Cf(|Z_t|)\,\dif t+f'(|Z_t|)\frac{2|Z_t|}{|\sigma^{-1}Z_t|}\,\dif W_t,
	\end{align*}
	where $C = \frac 12\theta_0f'(R_0)$. This implies for $t<\tau$ that
	\begin{gather*}
		\dif\left[\eup^{Ct}f(|Z_t|)\right]
		\leq \eup^{Ct} f'(|Z_t|) \frac{2|Z_t|}{|\sigma^{-1}Z_t|}\,\dif W_t.
	\end{gather*}
	Using the finite stopping times $\tau_n := \inf\left\{t\geq0\,:\,|Z_t|\notin[1/n,n]\right\} \leq \tau$ and optional stopping, we see that the right-hand side of the above inequality is a mean-zero martingale. Hence,
	\begin{gather*}
		\Ee\left[\eup^{C(t\wedge\tau_n)}f(|Z_{t\wedge\tau_n}|)\right]\leq f(|Z_0|)=f(|x-y|),\quad t\geq 0.
	\end{gather*}
	Since $\tau_n\uparrow\tau$ as $n\uparrow\infty$, we can apply Fatou's lemma and get
	\begin{gather*}
		\Ee\left[\eup^{C(t\wedge\tau)}f(|Z_{t\wedge\tau}|)\right]
		\leq f(|x-y|).
	\end{gather*}
	Noting that $Z_t=0$ for $t\geq\tau$ and $f(0)=0$, we conclude that
	\begin{gather*}
		\eup^{Ct}\Ee f(|Z_t|)\leq f(|x-y|).
	\end{gather*}
	Because of \eqref{linearbound}, we see that for all $t\geq0$,
	\begin{gather*}
		f'(R_0)\Ee|Y_t^x-\hat{Y}_t|\leq\Ee f(|Z_t|)
		\leq\eup^{-Ct}f(|x-y|)
		\leq\eup^{-Ct}|x-y|.
	\end{gather*}
	Since, by construction, $\law(\hat{Y}_t)=\law(Y_t^y)$, the proof is finished.
\end{proof}

\begin{ack}
C.-S.\ Deng is supported by  the National Natural Science Foundation of China (12371149) and the Alexander von Humboldt Foundation.
R.L.\ Schilling is supported by the 6G-Life project (BMBF Grant No.~16KISK001K) and TU Dresden's ScaDS.AI initiative.
L.\ Xu is supported by the National Natural Science Foundation of China No. 12071499, The Science and Technology Development Fund (FDCT) of Macau S.A.R. FDCT 0074/2023/RIA2, and the University of Macau grants MYRG2020-00039-FST, MYRG-GRG2023-00088-FST.
\end{ack}

%\noindent {\bf Declaration of Interests}: None. 

\end{document}